\crefname{hypothesis}{Hypothesis}{Hypotheses}
\title{Continuity conditions weaker than lower semi-continuity\thanks{\funding{This work was funded by funding Australian Research Council grant DP230100905.}}}
\author{Jacob Westerhout\thanks{School of Mathematics and Physics, University of Queensland, Brisbane,
Australia 
  (\email{j.westerhout@uq.edu.au}).}
\and Xin Guo\thanks{School of Mathematics and Physics, University of Queensland, Brisbane,
Australia 
  (\email{xin.guo@uq.edu.au}).}
\and Hien Duy Nguyen\thanks{Department of Mathematics and Physical Science, La Trobe University,
Melbourne, Australia. Institute of Mathematics for Industry, Kyushu University, Fukuoka, Japan. 
  (\email{hien@imi.kyushu-u.ac.jp}).}}
\newcommand{\set}[1]{\left\{ #1 \right \} }
\newcommand{\lev}{\mathrm{lev}}
\newcommand{\ab}[1]{\left(#1\right)}
\newcommand{\inprod}[2]{\left\langle#1,#2 \right\rangle}
\newcommand{\argmin}{\mathop{\mathrm{arg\,min}}}
\begin{document}

\maketitle
% REQUIRED
\begin{abstract}
Lower semi-continuity (\texttt{LSC}) is a critical assumption in many foundational optimisation theory results; however, in many cases, \texttt{LSC} is stronger than necessary. This has led to the introduction of numerous weaker continuity conditions that enable more general theorem statements. In the context of unstructured optimization over topological domains, we collect these continuity conditions from disparate sources and review their applications. As primary outcomes, we prove two comprehensive implication diagrams that establish novel connections between the reviewed conditions. In doing so, we also introduce previously missing continuity conditions and provide new counterexamples.
\end{abstract}

% REQUIRED
\begin{keywords}
lower semi-continuity, optimization theory, attainment of the infimum, minimizer set, topological spaces.
\end{keywords}

% REQUIRED
\begin{MSCcodes}
49J45, 54C08, 90C26, 90C48, 91B99
\end{MSCcodes}

\section{Introduction}

When solving optimization problems, lower semi-continuity (\texttt{LSC}) often appears as a pivotal assumption, being a common condition in various versions of the Weierstrass extreme value theorem, Berge maximum theorem, Takahashi minimization theorem, Ky Fan minimax inequality, von Neumann minimax theorem, and Ekeland variational principle, among other essential results in optimization theory, variation analysis, nonlinear analysis, and related areas; see, e.g.,  \cite{attouch2014variational,aubin2013optima,kristaly2010variational,rockafellar2009variational}.  These results typically rely on desirable properties of \texttt{LSC} functions, particularly regarding the set of minimizers, although \texttt{LSC} is often a stronger condition than necessary.

Over time, many relaxations and variations of the \texttt{LSC} condition have been proposed, which seek to retain the salient properties of \texttt{LSC} functions while discarding superfluous ones. In this work, we comprehensively review such weaker continuity conditions and provide examples of their usage. Furthermore, we map the implications and connections between these continuity conditions, some of which are novel and have not previously been reported.

In particular, we focus on weaker notions of the \texttt{LSC} condition for functions mapping from some topological space $\cal{X}$ to the extended reals $\overline{\mathbb R}$ in the context of unstructured optimization problems. We further categorize the considered conditions into three broad categories:
\begin{enumerate}
    \item Relaxations of the definition of \texttt{LSC} functions.
    \item Formalization of a notion of `\texttt{LSC} from above'.
    \item Distillations of the \texttt{LSC} assumption.
\end{enumerate}
We categorize the considered conditions, review their applications, and provide definitions within a unified notation. Note that our survey is by no means exhaustive, as even more notions of continuity are available when $\cal{X}$ is endowed with additional structure (such as vector or metric space conditions), or when the co-domain is an ordered space other than $\overline{\mathbb R}$.

\subsection{Relaxations of \texttt{LSC} functions}
%There are types of continuity that weaken the definition of LSC. 
Three main types of continuity conditions relax the definition of \texttt{LSC}: lower quasi-continuous (\texttt{LQC}), introduced in \cite{scalzo2009uniform}; lower pseudo-continuous (\texttt{LPC}), introduced in \cite{morgan2007pseudocontinuous}; and weakly lower continuous (\texttt{WLC}), introduced in \cite{campbell1990optimization,campbell1990optimization_bad}. In the literature, \texttt{LQC} and \texttt{LPC} functions have also been referred to as weakly lower pseudo-continuous, and lower P-continuous, respectively (cf. \cite[def 1]{qiu4811574existence} and \cite[def 4]{scalzo2013essential}).

The most popular of these notions appear to be the \texttt{LPC} functions, which have been studied in the works of \cite{cotrina2022remarks,quartieri2022existence,scalzo2008pseudocontinuity}, and used in the analysis of preference relations \cite{morgan2007pseudocontinuous}, extensions of the Berge maximum theorem \cite{cotrina2022remarks,morgan2007pseudocontinuous, xiaoling2017berge}, existence results for Nash equilibria \cite{morgan2004pseudocontinuity,morgan2007pseudocontinuous,morgan2008variational} and Berge equilibria \cite{deghdak2013existence}, analysis of continuity \cite{anh2016continuity} and stability \cite{zhou2015characterization} of minima, characterizations of order-preserving continuous representations \cite{scalzo2008pseudocontinuity}, extensions of the Weierstrass extreme value theorem \cite{morgan2007pseudocontinuous, amini2016some}, extension of the Ky Fan minimax inequality \cite{cotrina2022remarks}, and to characterize solutions of minimax problems \cite{morgan2004pseudocontinuity}, parametric optimization problems \cite{morgan2004pseudocontinuity}, and the well-posedness of optimization problems \cite{anh2021levitin,anh2020well,anh2011well,morgan2006discontinuous}. On the other hand, \texttt{LQC} functions have been used to prove the existence of minimax solutions \cite{morgan2004new,scalzo2011existence}, the Tychonoff well-posedness of problems \cite{morgan2006discontinuous}, and to extend the Weierstrass theorem \cite{morgan2004new,amini2016some} and characterize the continuity of infima \cite{morgan2004new}. An extension of the Weierstrass extreme value theorem via the \texttt{WLC} assumption appears in  \cite{campbell1990optimization}.

% There is additionally partially lower continuous (plc) introduced in \cite[def 4]{nosratabadi2014partially} as a type of continuity of orderings. 
% This can be made into a type of continuity for functions in the following way.
% For any set $\mathcal X$ and any function $f:\mathcal X\to\overline{\mathbb R}$, we can define an ordering $\succ$ on $\mathcal X$ by 
% \[x\succ y\Longleftrightarrow f(x) >f(y)\] 
% Minimizing $f$ is equivalent to minimizing $\succ$. 
% For more information see for example \cite{bosi2020mathematical}.
% Plc functions have been used to extend Weierstrass's extreme value theorems \cite{nosratabadi2014partially}.

Sequential versions of some of these  conditions have also been considered. Namely,  
sequentially lower quasi-continuous (\texttt{SLQC}) functions were introduced in \cite{morgan2004new}, and sequentially lower pseudo-continuous (\texttt{SLPC}) functions were introduced in \cite{morgan2004pseudocontinuity}, %and sequentially weakly lower continuous (slwc) was introduced in \cite[def 4]{scalzo2013essential}.
where \texttt{SLQC} has also appeared as sequentially lower weakly pseudo-continuous \cite[def 2.2]{morgan2006discontinuous}. These sequential versions share the same applications as \texttt{LQC} and \texttt{SLPC} functions since the sequential and non-sequential conditions are equivalent on first-countable spaces (cf. \cite[prop 2.3]{morgan2007pseudocontinuous} and \cite[prop 1]{scalzo2009uniform}) and in particular, on metric spaces.

%In fact, many of the applications given above are in metric spaces where no distinction is made between the sequential and non-sequential versions.  

%Slpc functions have been used to in the solution of minimax problems \cite[sec 4]{morgan2004pseudocontinuity}, in problems is parametric optimization problems \cite[sec 3]{morgan2004pseudocontinuity}, in economic theory \cite{morgan2008variational}, and to determine the well posedness of optimization problems \cite{anh2021levitin, morgan2006discontinuous, anh2020well}. 

%Slqc have been used in extensions of Weierstrass' extreme value theorem \cite[cor 3.1]{morgan2004new}, in the continuity of infima \cite[prop 5.1]{morgan2004new}, in the existence of solutions to maximin problems \cite[prop 5.2]{morgan2004new}, and in the Tychonoff well posedness of optimization problems \cite[thrm 3.1]{morgan2006discontinuous}.
%They have also seen applications in economics \cite{scalzo2013essential}. 

%The next class of functions that extend lsc and slsc only look at the function from above.
%In some sense, they `lower semi-continuous from above'. 
\subsection{\texttt{LSC} from above}
An early version of the `\texttt{LSC} from above' notion is submonotonicity (\texttt{SM}), introduced in \cite{nemeth1986nonconvex} and referred to in \cite{bednarczuk2009vector}.
More recently, \cite{al2008some} introduced lower monotone (\texttt{LM}) functions, and \cite{chen2002note} and \cite{kirk2001b} introduced the notion of lower semi-continuous from above (\texttt{LSCA}).

The definitions of \texttt{LSCA} and \texttt{LM} can differ when the codomain is $\mathbb{R}$ depending on if limits to $\pm\infty$ are accepted (cf. \cite[rem 2.4]{al2008some}), although the two concepts are equivalent when the codmain is $\overline{\mathbb R}$.
%Under one definition, they are equivalent, and under another, they are not; compare \cite[rem 2.4]{al2008some} and \cite[rem 10.6.3]{khan2016set}.
Furthermore, the terminology for these two conditions vary in the literature, with the notions having been referred to as
sequentially lower semi-continuous from above \cite{aruffo2008generalizations,chen2014monotone}, sequentially lower monotonicity \cite{hamel2005equivalents, qiu2017vectorial}, monotonically semi-continuous \cite{bednarczuk2009vector}, partially lower semi-continuous \cite[ex 2.1.4]{borwein2005techniques}, and sequentially submonotone \cite{bao2022exact}.  %, and sometimes confusingly submonotone \cite[def 4.3]{chen2006note}.
%Note that sequentially lower monotone also has a different definition \cite[def 3.2]{qiu2012p}.
%The same idea has been seen in the theory of set-valued mappings, which was called the decreasing monotonicity condition \cite[thrm 3.1]{bao2015fixed}. 

The \texttt{LSCA} assumption has been used in extensions of the Weierstrass extreme value theorem \cite{chen2014monotone, chen2002note, chen2018weak}, the Takahashi minimization theorem \cite{lin2006ekeland,lin2008maximal, park2024use, zhu2013caristi}, the Ekeland variation principle \cite{al2008some,chen2002note, kirk2001b, lin2006ekeland, qiu2020equilibrium, zhu2013caristi}, the Caristi fixed point theorem \cite{chen2002note,du2016generalized, kirk2001b, park2024use}, the Ky Fan minimax inequality \cite{chen2006note}, the von Neumann minimax theorem \cite{chen2018weak, lin2006ekeland, chen2006note}, and to characterize the existence of solutions of equilibrium problems \cite{castellani2010existence}. 

Formalizations of `\texttt{LSC} from above, near the infimum' are given by the below sequentially lower semi-continuous from above (\texttt{BLSCA}) and uniformly below sequentially lower semi-continuous from above (\texttt{UBLSCA}) conditions introduced by \cite[def 3.1]{bottaro2010some}. These conditions are weaker than \texttt{LSCA} and have been used in extensions of the Ekeland variational principle \cite{bottaro2010some, du2019some}, the Caristi fixed point theorem \cite{bottaro2010some, du2019some}, and the Takahashi minimization theorem \cite{du2019some}.

It is also possible to formalize the idea of `\texttt{LSC} strictly from above', such as via the
%Lsca and sm can be extended by looking at a function strictly from above.
sequentially decreasing semi-continuity (\texttt{SDSC}) condition of \cite{gajek1994weierstrass}, also referred to as  
strictly decreasing lower semi-continuity by \cite{bao2022exact}. These functions have been used to extend the Weierstrass extreme value theorem \cite{gajek1994weierstrass} and the Ekeland variational principle \cite{bao2022exact}.
%\cite{chen2000general, chen1999equivalents, chen2002note},

\subsection{Distillations of \texttt{LSC}}
We further partition the conditions that refine the notion of \texttt{LSC} to the \textit{strong} properties: those that allow approximation of the set of minimizers via the limits of minimizing nets, and the \textit{weak} properties: %those that conclude the non-emptiness of the set of minimizers, under some compactness condition.
those that conclude non-emptiness and closure properties of the set of minimizers under some compactness condition.

\subsubsection{Strong properties}
%Another extension of lsc and slsc is to instead look at properties with similar infimal properties of lsc and slsc.
Three main types of function classes allow for approximation of the set of minimizers: the regular global infimum (\texttt{RGI}) functions introduced in \cite{angrisani1982condizione} and appearing in the English literature in \cite{angrisani1996synthetic}; the inf-sequentially lower semi-continuous (\texttt{ISLSC}) functions, introduced in \cite{aruffo2008generalizations}; and the quasi-regular global infimum (\texttt{QRGI}) functions, introduced in \cite{amini2016some}.

% %As expected, a main use of the RGI function is to generate regularity of minima.
% \texttt{RGI} and \texttt{QRGI} functions have the desirable properties that  
% %have minimizers that are easily approximated \textcolor{red}{What do we mean by easily approximated here? Can we say something more specific?}\cite{amini2016some,angrisani1996synthetic,  kirk2000some}, 
% can characterize the existence of convergent minimizing nets \cite{amini2016some}, generate non-empty and relatively compact $\argmin$ sets \cite{amini2016some}, imply non-emptiness and compactness of the set of minimizers on non-compact domains \cite{kirk2000some}, give conditions for the uniqueness of minimizers \cite{angrisani1996synthetic}, and can be used characterize well-posed optimization problems \cite{amini2016some}. Furthermore,
% \texttt{RGI} functions have also been studied in the theory of fixed points \cite{ahmed2009some,angrisani1996synthetic,kirk2000some, gajic2014angrisani}. 

% \texttt{ISLSC} functions have been used in determining the Tychonoff well-posedness of optimization problems and in extensions of the Weierstrass extreme value theorem \cite{aruffo2008generalizations}. 
% %Islsc functions are analyzed in \cite{aruffo2008generalizations}.

% %\texttt{QRGI} functions have minima that can be naturally approximated \cite{amini2016some}, can be used to generate non-emptiness and relative compactness of the $\argmin$ set \cite{amini2016some}, and can be used to create well posedness of optimization problems \cite{amini2016some}.

% \textcolor{red}{NEW:}
\texttt{RGI} and \texttt{QRGI} functions have the desirable properties that they can be used to generate non-emptiness and relative compactness of the minimizer set \cite{amini2016some} and to generate well-posed optimization problems \cite{amini2016some}.
\texttt{RGI} functions have the additional desirable properties that they can characterize the existence of convergent minimizing nets \cite{amini2016some}, imply non-emptiness and compactness of the set of minimizers on non-compact domains \cite{kirk2000some}, and give conditions for the uniqueness of minimizers \cite{angrisani1996synthetic}.
Furthermore,
\texttt{RGI} functions have also been studied in the theory of fixed points \cite{ahmed2009some,angrisani1996synthetic,kirk2000some, gajic2014angrisani}.

\texttt{ISLSC} functions have been used in determining the Tychonoff well-posedness of optimization problems and in extensions of the Weierstrass extreme value theorem \cite{aruffo2008generalizations}.

\subsubsection{Weak properties}
Transfer continuous functions consist of the main class that makes conclusions regarding the non-emptiness of the minimizer set.
% The advantage of functions with weak properties is that they can be used to generate necessary and sufficient conditions.
%
%
The main types of transfer continuous functions are the transfer weakly lower continuous (\texttt{TWLC}) functions, introduced in \cite{tian1995transfer}; the sequentially transfer weakly lower continuous (\texttt{STWLC}) functions, introduced in \cite{morgan2004new}; and the transfer lower continuous (\texttt{TLC}) functions, introduced in \cite{tian1995transfer}.
We refer the reader to \cite{zhou1992transfer} for the motivation behind transfer continuous functions.
%Roughly, these classes of functions aim to give a property of the $\argmin$ set. 
%As explained in the motivation of transfer continuities, continuity at a point is not required, but if there is a continuity-type condition at some other point, that can be transferred. 
%Similar ideas are used in economics \cite{tian2010existence}.

\texttt{TWLC} functions precisely characterize the attainment of the minimum on compact domains (cf. \cite[lem 1]{amini2021fixed} and \cite[thm 1]{tian1995transfer}) and consequentially permit the statement of the most general extension of the Weierstrass extreme value theorem on compact sets.
Similarly, \texttt{STWLC} functions characterize the attainment of the minimum on sequentially compact domains \cite[thm 2.1]{morgan2004new}, while
\texttt{TLC} functions characterize the $\argmin$ set being closed and non-empty on compact domains \cite[thm 2]{tian1995transfer}. 

\texttt{TWLC} functions have been applied to establish of non-emptiness and relative compactness of the $\argmin$ set \cite{amini2021fixed,amini2016some}, as well as the existence of solutions to minimax problems \cite{scalzo2011existence}, and have found use primarily in the mathematical economics literature (see, e.g., \cite{nosratabadi2014partially, prokopovych2017strategic, scalzo2010pareto}).
\texttt{STWLC} functions share the applications of \texttt{TWLC} functions, since the two notions coincide on first countable spaces \cite[prop 2.1]{morgan2004new}. 

\texttt{TLC} functions have been used in extensions of the Berge maximum theorem \cite{tian1995transfer} and have applications to the theory of fixed points \cite{amini2021fixed} and bilevel optimization \cite{ding2006bilevel}. As with other transfer continuity assumptions, \texttt{TLC}  functions mainly appear in mathematical economics applications \cite{bueno2021existence, ding2003constrained, ding2003pareto, qiu4811574existence}.

%(\textcolor{red}{Not that the paper doesn't assume Hausdorff but the result is wrong without it}). 

%The definition of STWLC was originally defined on structures of convergence instead of topological spaces \cite{morgan2004new}. 
%There are extensions of twlc to vector valued functions \cite[thrm 3.1]{cotrina2024existence}. 

%Note that due to the weakness of these continuities, they lose some desirable properties of lsc functions. For example, many of these types of continuities are not closed under uniform limits \cite{scalzo2009uniform}. 

\subsection{Continuity conditions that are not considered} 
Due to the scope of our survey, we resist covering continuity conditions that stray from the context of unstructured optimization problems on topological domains. However, we list some important examples of such conditions for the interested reader, including functions that are
decreasing semi-continuous towards a point, on metric spaces \cite{gajek1994weierstrass}, and functions that are upper semi-continuous from the right, on ordered spaces \cite{eisenfeld1975comparison, eisenfeld1975fixed}.
When the co-domain is a general ordered space,  order lower-semicontinuous functions \cite{deville2000vector}, lower quasi-continuous functions \cite{subiza1997numerical}, and transfer pseudo lower-continuous functions \cite[def 4]{zhou1992transfer} are applicable.
%Note that for functions taking values in $\overline{\mathbb R}$  transfer pseudo lower-continuous functions coincides with transfer weakly lower-continuous \cite[p.g. 370]{zhou1992transfer}.

Several continuity conditions are specific to bi-functions, which include the generalized $t$-quasi-transfer
continuous  \cite[def 2]{scalzo2013essential}, quasi-transfer lower continuous  \cite[def 7]{tian1995transfer}, positively quasi-transfer
continuous  \cite[def 3]{scalzo2013essential}, $\gamma$-transfer lower continuous  \cite[def 3.2]{tian2017full}, transfer compactly lower semicontinuous  \cite[def 3.3]{chen2007coincidence}, %$\gamma$-recursively transfer lower continuous \cite[def 3.2]{tian2017full}, 
and diagonally transfer lower continuous  \cite[def 4.1]{tian2017full} conditions.
%\textcolor{red}{I think I've come across more like this before. Maybe go looking?}
%See also \cite{reny2020nash}. 
Similarly, the lower 0-level closed \cite{anh2011well, anh2012well}, strongly 0-level closed \cite{anh2021levitin,anh2013well}, $b$-level lower semicontinuous \cite{anh2009well} and $b$-level quasi-lower semicontinuous \cite{anh2009well} conditions are exclusive to the context of equilibrium problems.

%Order lower-semicontinuous seems apriori to be different to lsc, but it agrees with lsc when the co-domain is the real line \cite[cor 5]{deville2000vector}.

% other types of continuities have been used for a general order, such as partially lower continuous \cite[def 4]{nosratabadi2014partially}, and transfer upper pseudo-continuous \cite[def 4]{zhou1992transfer}.
%This is relevant here as for any set $\mathcal X$ and any function $f:\mathcal X\to\overline{\mathbb R}$, we can define an ordering $\succ$ on $\mathcal X$ by saying 
%\[x\succ y\Longleftrightarrow f(x) >f(y)\] 
%Minimizing $f$ on $\mathcal X$ is the same as minimizing $\succ$ on $\mathcal X$. 
%For more information see for example \cite{bosi2020mathematical}.

Lastly, we note that all continuity conditions that weaken \texttt{LSC} admit a complementary weakening of the upper semi-continuity (\texttt{USC}) version by requiring that $-f$ satisfies the particular condition.
Similarly, there are weaker notions of  continuity that require both $f$ and $-f$ to satisfy the particular weaker variant of \texttt{LSC}. 
Of particular note is the notion of pseudo-continuity, which has been extensively analyzed \cite{cotrina2022remarks, duy2022continuity, morgan2006discontinuous, morgan2007asymptotical,qiu2018approximation, scalzo2008pseudocontinuity,   scalzo2009hadamard}.
Such conditions fall outside of our scope since they replace the notion of continuity rather than \texttt{LSC}.

The remainder of the text proceeds as follows. In \cref{sec:Definitions}, we provide basic definitions of the reviewed continuity conditions, new continuity conditions, and some alternative formulations. In \cref{sec:Diagrams}, we present our main results regarding implications between the various continuity conditions via implication diagrams, and review results already known in the literature. Proofs of the veracity of the implication diagrams and various counterexamples are provided in \cref{sec:Proofs}. Pointers to known results regarding connections between the continuity conditions are collected in the Appendix.

\begin{comment}
Of note is pseudo-continuous ($f$ and $-f$ are both lpc) analyzed in \cite{morgan2006discontinuous}. 
They have some fascinating properties.
For example, they satisfy a weak form of intermediate value theorem \cite[prop 2.2]{scalzo2008pseudocontinuity} and when $f$ is injective, are discontinuous at most countably many points \cite[prop 2.3]{scalzo2008pseudocontinuity}. 
They additionally characterize the solution to the following problem: given a function $f$, is there a continuous function $g$ such that $f(x) > f(y)$ iff $g(x) > g(y)$ \cite[thrm 3.2]{scalzo2008pseudocontinuity} \cite{bosi2023simple}.
Lpc characterizes a similar solution when we only required $g$ to be lsc \cite[prop 2.4]{cotrina2022remarks}. 
These functions have applications in economics \cite{morgan2007asymptotical}, equilibrium theory \cite{qiu2018approximation}, and in the continuity of solution maps \cite{duy2022continuity}.
\end{comment}

%Confusingly, there is also the notion of pseudo transfer lower continuous \cite{bueno2021existence} used in generalized Nash equilibrium problems. 

\section{Definitions} \label{sec:Definitions}
Let $f:\mathcal X\to \overline{\mathbb R}$ with $\mathcal X$ a topological space and $\overline{\mathbb R}$ equipped with its standard topology  (see, e.g., \cite[p.g. 83]{oden2017applied}).
Unless otherwise specified $(x_n)$ and  $(x_\alpha)$ denote sequences and nets in $\mathcal X$, respectively. In this text, we  follow the terminology and definitions regarding nets from \cite{willard2012general}.

Using the shorthand $\inf_{\cal X}f=\inf_{x\in\cal X}f(x)$, we say that $(x_n)$ is a minimizing sequence if 
\[f(x_n)\to \inf_{\mathcal X}f,\]
with minimizing nets analogously defined. We denote the (potentially empty) set of minimizers by
\[\argmin(f) = \set{x\in\mathcal X:f(x) = \inf_{\mathcal X}f}.\] 

Writing the set of neighborhoods of $x\in \mathcal X$  as $\mathcal N(x)$, we use the notation
\begin{gather*}
%\liminf_{y\to x}f(y) = \sup_{O\ni x}\inf_{y\in O} f(y),
\liminf_{y\to x}f(y) = \sup_{O\in\mathcal N(x)}\inf_{y\in O} f(y).
\end{gather*}
%\textcolor{red}{Should find a reference for properties of liminf}
%
%where the supremum on the right-hand side is taken over all the open sets
%$O$ that cover $x$. 
%
%Since the sequence convergence $x_n\to x$ is defined
%as that $\{ x_n \}\backslash O$ is finite for any open set $O\ni x$,
%%$\liminf_{y\to x} f(x)\geq \liminf_{n\to\infty} f(x_n)$ for any sequence $x_n\to x$,
%$\liminf_{n\to\infty} f(x_n) \geq \liminf_{y\to x} f(x)$ for any sequence $x_n\to x$,
%where a sequence achieving the equality may not exist, for example, in the
%space $\mathbb{R}$ equipped with the countable complement topology.
%%}
Similarly, for a net $(x_\alpha)$, we write %(\textbf{cite hitchhikers guide})
\[\liminf_\alpha f(x_\alpha) = \lim_\alpha \inf_{\beta\geq \alpha} f(x_\beta)\] 
%\textcolor{red}{Need to put in the definition of liminf of a net.}
%\textcolor{red}{Should find a reference for properties.}
and say that $(y_\alpha)\subseteq \overline{\mathbb R}$
is decreasing if $\forall \alpha$ and $\forall \beta\geq \alpha$, $y_\beta\leq y_\alpha$. 
Further, we say that $(y_\alpha)$ is strictly decreasing if $\forall \alpha$ and $\forall \beta > \alpha$, $y_\beta < y_\alpha$.
%Note that a net can be vacuously strictly decreasing as it is possible for there to be an $\alpha$ such that there does not exist a $\beta > \alpha$, for example, when $(y_\alpha)$ is trivial. \textcolor{purple}{(Silly readers like Xin Guo would fee confused: is ``vacuously strictly decreasing'' some new concept I missed? Does this mean not strictly decreasing, or does this mean a special instance of strictly decreasing? The word ``trivial'' carries no information. Maybe we change this to: Note that a net is still strictly decreasing when the index set contains only one element $\alpha0$, in which there does not exist a $\beta>\alpha_0$.)} % Of course, deleting this is OK.

The non-empty interval $(p,q)$ is a jump if $p,q\in f(\mathcal X)$ but there does not exist a $z\in\mathcal X$, such that 
\[p< f(z) < q,\] 
%We say that $(x,y)$ is jump point if there is a jump $(p,g)$ such that either $f(x) = q$ and $f(y) = p$, or $f(x)=q$ and $f(y)=p$. 
while the pair $(x,y)$ is a jump point if either $(f(x),f(y))$ or $(f(y),f(x))$ is a jump.
Further, we say there is a jump at $x$ if there exists a $y$, such that $(x,y)$ is a jump point. %either $(x,y)$ or $(y,x)$ is a jump point. 

Note that the definition above does not always align with what one would consider a jump. 
For example, when $\cal X = \mathbb R$,
\[f(x) = \begin{cases}
    x & x\leq 0\\ 
    x+1 & x>0
\end{cases}\] 
has no jump, since $1\notin f(\cal X)$, while 
\[f(x) = \begin{cases}
    0 & x\leq 0\\
    1 & x>0
\end{cases},\] 
has jump $(0,1)$. 
%As such, $f(\mathcal X)$ is necessary for $f$ to have a jump, but unintuitively, it is not sufficient.
%Note that by the intermediate value theorem, if $f$ is continuous and $\mathcal X$ is connected, then $f$ has no jumps.
%If $\mathcal X$ is not connected $f$ can be continuous and have jump points. 

\begin{comment}
We do, however have this result
\begin{lemma}
    $f$ has a jump iff $f(\mathcal X)^c$ has a connected component which is open in $\overline{\mathbb R}$ and a subset of $[\inf_{\mathcal X}f, \sup_{\mathcal X}f]$
\end{lemma}
\begin{proof}
    $(\Rightarrow)$. For $(p,q)$ a jump, $(p,q)$ forms a connected component of $f(\mathcal X)^c$. Because $p,g\in f(\mathcal X)$ we must have $(p,q)\subseteq [\inf_{\mathcal X}f, \sup_{\mathcal X}f]$\\ 
    $(\Leftarrow)$. Let $U\subseteq [\inf_{\mathcal X}f, \sup_{\mathcal X}f]$ be a connected component of $f(\mathcal X)^c$, open in $\overline{\mathbb R}$.
    The only connected sets in $\overline{\mathbb R}$ are intervals, so we can write $U = (p,q)$ for $p,q\in\overline{\mathbb R}$.
    We are done if $p,q\in f(\mathcal X)$. 
    If not, then there would exist either $p'<p$ \textbf{I don't think this matters enough}
\end{proof}
\end{comment}

\subsection{Continuity conditions}

We provide all definitions in a standardized form. Where our presentation differs to the original characterizations, we note that the definitions are equivalent. We also take the opportunity to introduce some novel continuity conditions that are missing from the literature.

Recall $\mathcal{X}$ is a topological space and $f:\mathcal{X}\to\overline{\mathbb{R}}$.
We say that $f$ is:
\begin{enumerate}
    \item \textbf{lower semi-continuous} (\texttt{LSC}) at $x\in \mathcal X$ iff
    \[f(x) \leq \liminf_{y\to x} f(y),\]
    \item \textbf{sequentially lower semi-continuous} (\texttt{SLSC}) at $x\in \mathcal X$  iff $\forall x_n\to x$, 
\[f(x)\leq \liminf_{n\to\infty} f(x_n),\] 
    \item \textbf{lower pseudo-continuous} (\texttt{LPC})  at $x\in \mathcal X$ iff $\forall y\in \mathcal X$ with $f(y) < f(x)$, 
\[f(y) < \liminf_{z\to x}f(z),\] 
    \item \textbf{sequentially lower pseudo-continuous} (\texttt{SLPC}) at $x\in\mathcal X$ iff $\forall y\in\mathcal X$ with $f(y) < f(x)$ and $\forall x_n\to x$, 
\[f(y) < \liminf_{n\to\infty} f(x_n),\] 
    \item \textbf{weakly lower continuous} (\texttt{WLC}) at $x\in\mathcal X$ iff $\forall y\in\mathcal X$ with $f(y) < f(x)$, $\exists U\in\mathcal N(x)$ such that %$\forall z\in U$ 
    \[f(y) \leq \inf_U f,\] 
    \item \textbf{sequentially weakly lower continuous} (\texttt{SWLC}) at $x\in\mathcal X$ iff $\forall y\in\mathcal X$ with $f(y) < f(x)$, and $\forall x_n\to x$, for $n$ sufficiently large 
    \[f(y) \leq f(x_n),\] 
    \item \textbf{lower quasi-continuous} (\texttt{LQC}) at $x\in \mathcal X$  iff $\forall y\in \mathcal X$ with $f(y) < f(x)$,
    \[f(y)\leq \liminf_{z\to x}f(z),\] 
    \item \textbf{sequentially lower quasi-continuous} (\texttt{SLQC})  at $x\in \mathcal X$ iff $\forall y\in\mathcal X$ with $f(y) < f(x)$ and $\forall x_n\to x$, 
    \[f(y)\leq \liminf_{n\to\infty} f(x_n),\] 
    \item \textbf{partially lower continuous} (\texttt{PLC}) at $x$ iff %$f$ has countably many jumps and 
    for each $y\in\mathcal X$ with $f(y)<f(x)$ such that $(x,y)$ is not a jump point, $\exists U\in\mathcal N(x)$ such that
    \[f(y) \leq \inf_U f,\] 
    \item \textbf{sequentially partially lower continuous} (\texttt{SPLC}) at $x\in\cal X$ iff %$f$ has countably many jumps and 
    for each $y\in\mathcal X$ with $f(y)<f(x)$, such that $(x,y)$ is not jump point, and $\forall x_n\to x$, for $n$ sufficiently large 
    \[f(y) \leq f(x_n),\] 
    \item \textbf{submonotone} (\texttt{SM})  at $x\in \mathcal X$ iff $\forall x_\alpha \to x$ with $(f(x_\alpha))$ decreasing,
    \[f(x) \leq f(x_\alpha),\quad \forall \alpha,\] 
    \item \textbf{lower monotone} (\texttt{LM}) at $x\in \mathcal X$  iff $\forall x_n\to x\in\mathcal X$ with $(f(x_n))$ decreasing, 
    \[f(x) \leq f(x_n),\quad \forall n\in\mathbb N,\] 
    \item \textbf{lower semi-continuous from above} (\texttt{LSCA}) at $x\in \mathcal X$  iff $\forall x_n\to x$ with $(f(x_n))$ decreasing, 
    \[f(x)\leq \lim_{n\to\infty} f(x_n),\] 
    \item \textbf{decreasing semi-continuous} (\texttt{DSC})  at $x\in \mathcal X$ iff $\forall x_\alpha\to x$ with $(f(x_\alpha))$ strictly decreasing, 
    \[f(x)\leq \lim_\alpha f(x_\alpha), \] 
    \item \textbf{sequentially decreasing semi-continuous} (\texttt{SDSC}) at $x\in \mathcal X$  iff $\forall x_n\to x$ with $(f(x_n))$ strictly decreasing, 
    \[f(x) \leq \lim_{n\to\infty} f(x_n),\] 
    \item \textbf{regular global infimum} (\texttt{RGI}) at $x\in \mathcal X$ iff either $f(x)=\inf_{\mathcal X}f$ or $\exists U\in\mathcal N(x)$ such that 
\[\inf_{\mathcal X}f < \inf_U f,\]
    \item \textbf{inf-sequentially lower semi-continuous} (\texttt{ISLSC}) at $x\in \mathcal X$  iff $f(x)=\inf_{\mathcal X}f$ or there does not exist a minimizing sequence converging to $x$,
    \item \textbf{quasi-regular global infimum} (\texttt{QRGI}) at $x\in \mathcal X$ iff either
    \begin{enumerate}
      \item $\forall V\in\mathcal N(x)$, $\inf_{\mathcal X}f\in f(V)$,%$$\inf_{V}f = \inf_{\mathcal X}f$ 
      \item $\exists U\in \mathcal N(x)$ such that $\inf_{U}f > \inf_{\mathcal X}f$,
    \end{enumerate}
    \item \textbf{sequentially quasi-regular global infimum} (\texttt{SQRGI}) at $x\in \mathcal X$  iff 
    either there does not exist a minimizing sequence convergence to $x$ or there is a sequence $(x_n)\subset \argmin(f)$ with $x_n\to x$,
    %for any sequence $x_n\to x$ with $f(x_n)\to \inf_{\mathcal X}$, we have $\forall V\in\mathcal N(x)$, $\inf_{\mathcal X}f \in f(V)$
    \item \textbf{uniformly below lower semi-continuous from above} (\texttt{UBLSCA}) iff $\exists a\in(\inf_{\mathcal X}f,\, \infty]$ such that $\forall x\in \mathcal X$ and $\forall x_\alpha\to x$ with $(f(x_\alpha))$ decreasing and $f(x_\alpha)\leq a$,
\[f(x) \leq \lim_{\alpha} f(x_\alpha),\] 
    \item \textbf{uniformly below sequentially lower semi-continuous from above}\\ (\texttt{UBSLSCA}) iff $\exists a\in(\inf_{\mathcal X}f,\, \infty]$ such that $\forall x\in \mathcal X$ and $\forall x_n\to x$ with $(f(x_n))$ decreasing and $f(x_n)\leq a$,
\[f(x) \leq \lim_{n\to\infty} f(x_n),\] 
    \item \textbf{below lower semi-continuous from above} (\texttt{BLSCA}) at $x\in \mathcal X$  iff $\exists a\in(\inf_{\mathcal X}f,\, \infty]$ such that $\forall x_\alpha\to x$ with $(f(x_\alpha))$ decreasing and $f(x_\alpha)\leq a$, one has
\[f(x) \leq \lim_{\alpha} f(x_\alpha),\] 
    \item \textbf{below sequentially lower semi-continuous from above} (\texttt{BSLSCA}) at $x\in \mathcal X$ iff $\exists a\in(\inf_{\mathcal X}f,\, \infty]$ such that $\forall x_n\to x$ with $(f(x_n))$ decreasing and $f(x_n)\leq a$, one has
\[f(x) \leq \lim_{n\to\infty} f(x_n),\] 
    \item \textbf{transfer lower continuous} (\texttt{TLC}) at $x\in \mathcal X$ iff either $f(x) = \inf_{\mathcal X}f$ or $\exists y\in \mathcal X$ and $\exists U \in \mathcal N(x)$ such that $\forall z\in U$, 
\[f(y) < f(z),\] 
    \item \textbf{sequentially transfer lower continuous} (\texttt{STLC}) at $x\in \mathcal X$ iff  either $f(x) = \inf_{\mathcal X}f$ or $\exists y\in \mathcal X$ such that $\forall x_n\to x$ and for $n$ sufficiently large, 
    \[f(y) < f(x_n),\]     
    \item \textbf{transfer weakly lower continuous} (\texttt{TWLC}) at $x\in \mathcal X$  iff $\exists y\in \mathcal X$ and $\exists U\in \mathcal N(x)$, such that
\[f(y) \leq \inf_{U} f,\] 
    \item \textbf{sequentially transfer weakly lower continuous} (\texttt{STWLC})  at $x\in \mathcal X$  iff $\exists y\in \mathcal X$, such that $\forall x_n\to x$, 
\[f(y)\leq \liminf_{n\to\infty} f(x_n).\] 
\end{enumerate}
Where it applies, we say that $f$ itself satisfies the respective continuity condition if the condition holds $\forall x\in\cal X$.
%Note that some of these definitions differ slightly from their original definitions.

\subsection{Equivalent definitions}
%Again let $\mathcal X$ be a topological space and let $f:\mathcal X\to \overline{\mathbb R}$.
In the sequel, we provide some alternative characterizations of the continuity conditions above that we found useful, justify the name of the condition, or demonstrate agreement with prior definitions given in the literature.

\begin{proposition} (extension of \cite[lem 15]{amini2016some}) \label{lemma:twlc} 
    The following are equivalent: 
    \begin{enumerate}
        \item $f$ is $\mathtt{TWLC}$ at $x\in\mathcal X$,
        \item $\exists y\in\mathcal X$ such that 
        \[f(y) \leq \liminf_{z\to x} f(z),\] 
        \item  $\exists y\in \mathcal X$ such that for all nets $x_\alpha \to x$, 
        \[f(y) \leq \liminf_\alpha f(x_\alpha),\] 
        \item for all nets $x_\alpha\to x$ $\exists y\in\mathcal X$ such that 
        \[f(y) \leq \liminf_\alpha f(x_\alpha).\] 
    \end{enumerate}
\end{proposition}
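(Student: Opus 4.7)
The plan is to establish the cycle $(1)\Rightarrow(2)\Rightarrow(3)\Rightarrow(4)\Rightarrow(1)$. The first three implications are immediate unfoldings using the characterization $\liminf_{z\to x}f(z)=\sup_{O\in\mathcal N(x)}\inf_{O}f$; all the substance lies in closing the loop with $(4)\Rightarrow(1)$.

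For $(1)\Rightarrow(2)$: given $y$ and $U$ witnessing \texttt{TWLC}, chain $f(y)\leq \inf_{U}f\leq \sup_{O\in\mathcal N(x)}\inf_{O}f = \liminf_{z\to x}f(z)$. For $(2)\Rightarrow(3)$: fix any net $x_\alpha\to x$; for each $O\in\mathcal N(x)$ the net is eventually in $O$, so some tail satisfies $\inf_{\beta\geq\alpha_0}f(x_\beta)\geq \inf_{O}f$, and hence $\liminf_\alpha f(x_\alpha)\geq \inf_{O}f$. Taking the supremum over $O$ yields $\liminf_\alpha f(x_\alpha)\geq \liminf_{z\to x}f(z)\geq f(y)$, with the same $y$ as in (2). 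The implication $(3)\Rightarrow(4)$ is trivial, as the witness $y$ from (3) is uniform across all nets.

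For $(4)\Rightarrow(1)$, I would argue by contrapositive. Suppose \texttt{TWLC} fails at $x$, so that for all $y\in\mathcal X$ and all $U\in\mathcal N(x)$ one has $f(y)>\inf_{U}f$. Fixing $U$ and taking the infimum over $y$ gives $\inf_{\mathcal X}f\geq \inf_{U}f$, while the reverse inequality is automatic; hence $\inf_{U}f=\inf_{\mathcal X}f$ for \emph{every} $U\in\mathcal N(x)$. Moreover, the strict inequality $f(y)>\inf_{\mathcal X}f$ holds for every $y$, so the infimum is not attained and, in particular, $\inf_{\mathcal X}f\neq+\infty$ (else $f\equiv+\infty$ makes \texttt{TWLC} trivial) and $-\infty\notin f(\mathcal X)$.

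Direct the product $\mathcal N(x)\times\mathbb N$ by $(U,n)\leq (V,m)\iff V\subseteq U$ and $n\leq m$, and, using $\inf_{U}f=\inf_{\mathcal X}f$, pick $x_{(U,n)}\in U$ with $f(x_{(U,n)})<\inf_{\mathcal X}f+1/n$ if $\inf_{\mathcal X}f\in\mathbb R$, or $f(x_{(U,n)})<-n$ if $\inf_{\mathcal X}f=-\infty$. Then $x_{(U,n)}\to x$ (given $V\in\mathcal N(x)$, any index beyond $(V,1)$ lies in $V$) and $\liminf_{(U,n)}f(x_{(U,n)})=\inf_{\mathcal X}f$. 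Applying (4) to this net would produce some $y\in\mathcal X$ with $f(y)\leq \inf_{\mathcal X}f$, contradicting the strict inequality above. The main obstacle is exactly this step: one must manufacture a convergent net whose $\liminf$ is so small that no function value can bound it from below, and the right tool is the minimizing net built from the identity $\inf_{U}f=\inf_{\mathcal X}f$ that the failure of \texttt{TWLC} supplies.
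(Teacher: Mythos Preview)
Your proof is correct and follows essentially the same route as the paper's: both reduce $(4)\Rightarrow(1)$ to the observation that failure of \texttt{TWLC} forces $\inf_U f=\inf_{\mathcal X}f$ for every $U\in\mathcal N(x)$, and then build a minimizing net converging to $x$ to extract, via (4), a point $y$ with $f(y)\leq\inf_{\mathcal X}f$. The only cosmetic difference is the index set for the net: the paper uses $\mathcal N(x)\times(\inf_{\mathcal X}f,\infty)$ with the order $(U,t)\geq(U',t')\iff U\subseteq U',\ t\leq t'$, which handles the finite and $-\infty$ infimum cases uniformly, whereas your $\mathcal N(x)\times\mathbb N$ indexing requires the small case split you wrote out.
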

\begin{proof}
    The implications $(1)\Rightarrow(2)$, $(2)\Rightarrow(3)$, and $(3)\Rightarrow(4)$ are trivial.\\ 
    $(4)\Rightarrow(1)$.
 If there is a neighbourhood
$U$ of $x$ such that $\inf_{U}f>\inf_{\mathcal{X}}f$, then (1)
is straightforward. So now assume that
\begin{gather*}
\inf_Uf=\inf_{\mathcal{X}}f,\quad\mbox{for all }U\in\mathcal{N}(x).
\end{gather*}
Consider the directed index set
\begin{gather*}
I=\left\{ (U,t): U\in\mathcal{N}(x),\; t>\inf_{\mathcal{X}}f \right\},
\end{gather*}
equipped with the direction $(U,t)\geq (U', t')$ when $U\subseteq U'$
and $t\leq t'$. 
If $f=\infty$, the result is trivial, so it may be assumed, without loss of generality, that $\inf_{\mathcal X}f<\infty$, whence $I\neq \emptyset$. 
%For any index $\alpha=(U,t)$, write $t_\alpha=t$, and use
%the axiom of choice to find some $x_\alpha\in U$ such that
%$f(x_\alpha)\leq t_\alpha$. 
For any index $\alpha = (U,t)$, take $x_\alpha \in U$ with $f(x_\alpha)\leq t$, noting that such an $x_\alpha$ exists.
Then $x_\alpha\to x$ with $f(x_\alpha)\to \inf_{\mathcal X}f$. 
By (4), $\exists y\in\mathcal X$ with 
\[f(y) \leq \liminf_\alpha f(x_\alpha) = \inf_{\mathcal X}f\] 
from which (1) is immediate.
\end{proof}
Replacing nets with sequences in item 3 justifies the definition of \texttt{STWLC}.

\begin{comment}
\begin{proof}
    If $f$ attains its minima, then trivially, (1) and (2) are satisfied.
    Hence, assume otherwise.\\ 
    $(1)\Rightarrow(2)$ Trivial.\\ 
    $(2)\Rightarrow(1)$ Proving the contrapositive.
    That is assume $\forall U\in\mathcal N(x)$ and $\forall y\in\mathcal X$ 
    \[\inf_U f \leq f(y)\] 
    This means $\forall U\in\mathcal N(x)$, 
    \[\inf_Uf = \inf_{\mathcal X}f.\] 
    Consider the set 
    \[I = \set{(U,y):U\in\mathcal N(x),\, y>\inf_{\mathcal X}f}\] 
    This set is non-empty because $f$ does not attain its minimum, so $\inf_{\mathcal X}f\neq \infty$, this set is non-empty. 
    Order it by 
    \[(U,y)\geq (U',y') \Longleftrightarrow U\supseteq U',\, y\geq y'\] 
    Note that this ordering is directed. 
    Then $\forall \alpha = (U,y)$ take $x_\alpha \in U$ with $f(x_\alpha)\leq y$. 
    Note that such an $x_\alpha$ exists. 
    Then $x_\alpha\to x$ and $f(x_\alpha)\to \inf_{\mathcal X}f$. 
    Because $f$ doesn't attain its minima $\forall y\in\mathcal X$, 
    \[\liminf_\alpha f(x_\alpha) < f(y)\] 
    This means (2) is not satisfied as required. 
\end{proof}
\end{comment}

\begin{proposition} (\cite[prop 2.1]{morgan2004pseudocontinuity} and \cite[prop 2.1]{morgan2007pseudocontinuous})
    %Let $\mathcal X$ be a topological and let $f:\mathcal X\to \overline{\mathbb R}$. 
    %Then the following statements are equivalent
    The following statements are equivalent:
    \begin{enumerate}
        \item $f$ is ($\mathtt{SLPC}$) $\mathtt{LPC}$,
        \item the set %$\forall \lambda \in f(X)$, 
        \[\set{(x,y)\in \mathcal X\times f(\mathcal X): f(x)\leq y}\]
        is (sequentially closed) closed in $\mathcal X\times f(\mathcal X)$, 
        \item $\forall \lambda \in f(\mathcal X)$, 
        $$\lev_{\leq \lambda}:= \set{x\in\mathcal X:f(x) \leq \lambda}$$ 
        is (sequentially closed) closed in $\mathcal X$. 
    \end{enumerate}
\end{proposition}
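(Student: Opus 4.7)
The plan is to prove the three conditions equivalent via the cycle $(1)\Rightarrow(2)\Rightarrow(3)\Rightarrow(1)$, running the topological and sequential versions in parallel (sequences replacing nets throughout). Set $E=\{(x,y)\in\mathcal X\times f(\mathcal X):f(x)\leq y\}$. For $(1)\Rightarrow(2)$, I take a convergent net (resp.\ sequence) $(x_\alpha,y_\alpha)\to(x,y)$ with $(x_\alpha,y_\alpha)\in E$; since $y\in f(\mathcal X)$, write $y=f(z)$, and if $f(x)>y$ then \texttt{LPC} (resp.\ \texttt{SLPC}) at $x$ gives $f(z)<\liminf_\alpha f(x_\alpha)$, contradicting $f(x_\alpha)\leq y_\alpha\to f(z)$. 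The implication $(2)\Rightarrow(3)$ is by slicing: for $\lambda\in f(\mathcal X)$, the slice $E\cap(\mathcal X\times\{\lambda\})$ is (sequentially) closed in the subspace $\mathcal X\times\{\lambda\}$, and first-coordinate projection is a homeomorphism carrying this slice onto $\lev_{\leq\lambda}$.

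The main obstacle is $(3)\Rightarrow(1)$, because closedness of $\lev_{\leq f(y)}$ only yields a non-strict bound, whereas (S)LPC demands a strict one. In the topological case, given $x,y$ with $f(y)<f(x)$, closedness of $\lev_{\leq f(y)}$ together with $x\notin\lev_{\leq f(y)}$ produces $O\in\mathcal N(x)$ with $f>f(y)$ on $O$, hence $\inf_O f\geq f(y)$. To upgrade to strict inequality I split into two cases. If $\inf_O f>f(y)$, then $\liminf_{w\to x}f(w)\geq\inf_O f>f(y)$ and we are done. Otherwise $\inf_O f=f(y)$ is unattained on $O$, so I may select $z\in O$ with $f(y)<f(z)<f(x)$; closedness of $\lev_{\leq f(z)}$ (applicable since $f(z)\in f(\mathcal X)$) then furnishes $O'\in\mathcal N(x)$ on which $f>f(z)$, giving $\liminf_{w\to x}f(w)\geq f(z)>f(y)$ as required.

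For the sequential $(3)\Rightarrow(1)$, given $x_n\to x$ and $y$ with $f(y)<f(x)$, I suppose $\liminf_n f(x_n)\leq f(y)$ and extract a subsequence $x_{n_k}$ with $f(x_{n_k})\to\liminf_n f(x_n)$. If infinitely many $k$ satisfy $f(x_{n_k})\leq f(y)$, the corresponding sub-subsequence lies in $\lev_{\leq f(y)}$ and still converges to $x$, so sequential closedness forces $f(x)\leq f(y)$, a contradiction. Otherwise $f(x_{n_k})>f(y)$ for all large $k$ while $f(x_{n_k})\to f(y)$; I pick $k_0$ large enough that $\lambda:=f(x_{n_{k_0}})\in f(\mathcal X)$ satisfies $f(y)<\lambda<f(x)$, and since $f(x_{n_k})\to f(y)<\lambda$ a tail of $(x_{n_k})$ enters $\lev_{\leq\lambda}$; sequential closedness of $\lev_{\leq\lambda}$ then produces $f(x)\leq\lambda<f(x)$, again a contradiction.
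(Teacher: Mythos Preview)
The paper does not supply its own proof of this proposition; it is stated with attribution to \cite{morgan2004pseudocontinuity,morgan2007pseudocontinuous} and no argument is given in the text. Your cycle $(1)\Rightarrow(2)\Rightarrow(3)\Rightarrow(1)$ is correct and self-contained. The only places worth a remark are the two-case splits in $(3)\Rightarrow(1)$, which are exactly the right device for upgrading the non-strict bound coming from closedness of $\lev_{\leq f(y)}$ to the strict inequality demanded by (S)\texttt{LPC}: in both the topological and sequential versions you manufacture an intermediate level $\lambda=f(z)\in f(\mathcal X)$ with $f(y)<\lambda<f(x)$ and then apply (3) at $\lambda$. In the sequential Case~B you tacitly use that $\liminf_n f(x_n)=f(y)$ (forced, since the assumed inequality gives $\leq$ while $f(x_{n_k})>f(y)$ eventually gives $\geq$); this is fine but could be said explicitly. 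Finally, in $(1)\Rightarrow(2)$ you invoke \texttt{LPC} to get $f(z)<\liminf_\alpha f(x_\alpha)$ directly; strictly speaking \texttt{LPC} gives $f(z)<\liminf_{w\to x}f(w)$, and one uses the standard inequality $\liminf_{w\to x}f(w)\leq\liminf_\alpha f(x_\alpha)$ for any net $x_\alpha\to x$ to conclude. None of this affects validity.
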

In particular, item 3 is the primary definition used by \cite{quartieri2022existence}. 

We have the following equivalent definition for \texttt{QRGI}, which justifies the naming of \texttt{SQRGI}.
%To provide an equivalent definition for \texttt{QRGI}, we have the following result, which also justifies the naming of \texttt{SQRGI}.

\begin{proposition}(extension of \cite[thm 6]{amini2016some})\label{lemma:qrgi}
    $f$ is $\mathtt{QRGI}$ at $x\in\cal X$ iff 
    there does not exist a minimizing net converging to $x$, or there is a net $(x_\alpha)\subset\mathrm{arg min}(f)$, with $x_\alpha \to x$. 
\end{proposition}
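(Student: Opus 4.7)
The plan is to reduce the biconditional to two cleaner equivalences, one for each disjunct in the definition of \texttt{QRGI}. Writing (a) and (b) for the two conditions in that definition, I will show (a) is equivalent to ``there is a net in $\argmin(f)$ converging to $x$'', and that the negation of (b) is equivalent to ``there exists a minimizing net converging to $x$''. Granting both, the result is immediate: \texttt{QRGI} $=$ (a)$\,\vee\,$(b) matches (net in $\argmin$ converges to $x$) $\,\vee\,$ (no minimizing net converges to $x$), which is exactly the claimed right-hand side.

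For the first equivalence, the forward direction is a standard neighborhood-filter construction: if $\inf_{\mathcal X}f\in f(V)$ for every $V\in\mathcal N(x)$, then for each such $V$ pick $x_V\in V$ with $f(x_V)=\inf_{\mathcal X}f$; indexing by $\mathcal N(x)$ ordered by reverse inclusion yields a net in $\argmin(f)$ converging to $x$. The converse is immediate: any net in $\argmin(f)$ converging to $x$ is eventually in each $V\in\mathcal N(x)$, so $V\cap\argmin(f)\neq\emptyset$ and hence $\inf_{\mathcal X}f\in f(V)$.

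For the second equivalence, the easy direction is: if a minimizing net $(x_\alpha)$ converges to $x$, then for any $U\in\mathcal N(x)$ the net is eventually in $U$, so $\inf_U f\leq \liminf_\alpha f(x_\alpha)=\inf_{\mathcal X}f$, forcing equality and so $\neg$(b). The nontrivial direction builds a minimizing net from the assumption $\inf_U f=\inf_{\mathcal X}f$ for all $U\in\mathcal N(x)$, mirroring the construction used in \cref{lemma:twlc}: take the directed index set $I=\{(U,t):U\in\mathcal N(x),\,t>\inf_{\mathcal X}f\}$ ordered by $(U,t)\geq(U',t')$ iff $U\subseteq U'$ and $t\leq t'$, pick $x_\alpha\in U$ with $f(x_\alpha)\leq t$, and verify $x_\alpha\to x$ with $f(x_\alpha)\to\inf_{\mathcal X}f$. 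The main obstacle, as in the \texttt{TWLC} proof, is cleanly arguing that the constructed net indeed converges and is indeed minimizing; the edge case $\inf_{\mathcal X}f=+\infty$ (where $f\equiv+\infty$, $\argmin(f)=\mathcal X$, and (a) holds trivially) is handled separately so that the index set $I$ can be assumed non-empty.
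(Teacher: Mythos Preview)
Your proposal is correct and follows essentially the same approach as the paper's proof: the paper also splits the biconditional into the two equivalences (a) $\Leftrightarrow$ ``there is a net in $\argmin(f)$ converging to $x$'' and (b) $\Leftrightarrow$ ``no minimizing net converges to $x$'', proving each direction with precisely the arguments you outline (including the appeal to the net construction from \cref{lemma:twlc} for the nontrivial direction of the second equivalence). Your explicit handling of the edge case $\inf_{\mathcal X}f=+\infty$ is a small addition but is already implicit via that same reference.
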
 
\begin{proof}
    We will prove this result in two parts corresponding to the two cases in the definition of \texttt{QRGI}:
    \begin{enumerate}
        \item $\exists U\in\mathcal{N}(x)$, such that $\inf_Uf > \inf_{\mathcal X}f$ iff there does not exist a minimizing net converging to $x$, and
        \item $\forall V\in\mathcal N(x)$, $\inf_{\mathcal X}f \in f(V)$ iff there is a net $(x_\alpha) \subset \argmin(f)$ with $x_\alpha\to x$.
    \end{enumerate}
    $(1)$\\
    $(\Rightarrow)$ For any net, such that $x_\alpha \to x$, we have 
    \[\liminf_\alpha f(x_\alpha) \geq \inf_Uf >\inf_{\mathcal X}f,\] 
    so any such $(x_\alpha)$ cannot be minimizing.\\
    $(\Leftarrow)$ Proving the contrapositive, assume that $\forall U\in\mathcal N(x)$,
    \[\inf_Uf = \inf_{\mathcal{X}}f.\] 
    Then, as shown in the proof of \cref{lemma:twlc}, there is a net, such that $x_\alpha \to x$ with $f(x_\alpha)\to \inf_{\mathcal X}f$.
    That is, a convergent minimizing net exists. \\
    $(2)$\\ 
    $(\Rightarrow)$ Order $\mathcal N(x)$ by inclusion, and $\forall V\in \mathcal N(x)$, let $(x_V)\subset V$ be such that $f(x_V) = \inf_{\mathcal X}f$.
    Then, $(x_V)_{V\in\mathcal N(x)}$ is a net converging to $x$, with $(x_V) \subset \argmin(f)$.\\ 
    $(\Leftarrow)$ Take any $V\in\mathcal N(x)$. Then, for $\alpha$ large enough $x_\alpha \in V$, and thus $\inf_{\mathcal X}f\in f(V)$. 
\end{proof}

The following result establishes the analogous equivalent definition for \texttt{RGI}. Note that the result suggests that \texttt{ISLSC} could equivalently be referred to as sequential \texttt{RGI}.

\begin{proposition} (extension of \cite[thm 2]{amini2016some}) \label{lemma:rgi}
    %Let $\mathcal X$ be a topological space and let $f:\mathcal X\to \overline{\mathbb R}$. 
    $f$ is $\mathtt{RGI}$ at $x\in\cal X$ iff there does not exist a minimizing net converging to $x$, or $f(x)=\inf_{\mathcal X}f$.
\end{proposition}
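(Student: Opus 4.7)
The plan is to reduce the biconditional to an equivalence of its ``non-trivial'' disjuncts, then directly repurpose the minimizing-net construction already used in the proof of \cref{lemma:twlc} (and in part (1) of \cref{lemma:qrgi}). Both sides of the stated equivalence share the common disjunct $f(x) = \inf_{\mathcal{X}} f$, so I would first dispose of the case where this holds (both sides are then trivially true) and assume throughout the remainder that $f(x) \neq \inf_{\mathcal{X}} f$. Under this assumption, it suffices to show that $\exists U \in \mathcal{N}(x)$ with $\inf_U f > \inf_{\mathcal{X}} f$ if and only if no minimizing net converges to $x$, which is precisely the content of part (1) of \cref{lemma:qrgi}.

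For the forward direction, given $U \in \mathcal{N}(x)$ with $\inf_U f > \inf_{\mathcal{X}} f$, any net $x_\alpha \to x$ is eventually in $U$, so
\[
\liminf_\alpha f(x_\alpha) \geq \inf_U f > \inf_{\mathcal{X}} f,
\]
ruling out the existence of a minimizing net converging to $x$.

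For the reverse direction, I would argue by contrapositive: suppose $\inf_U f = \inf_{\mathcal{X}} f$ for every $U \in \mathcal{N}(x)$. The case $\inf_{\mathcal{X}} f = \infty$ forces $f(x) = \infty = \inf_{\mathcal{X}} f$ and is absorbed by the common disjunct, so I may assume $\inf_{\mathcal{X}} f < \infty$. I would then invoke verbatim the construction from the proof of \cref{lemma:twlc}: form the directed set $I = \{(U,t) : U \in \mathcal{N}(x),\ t > \inf_{\mathcal{X}} f\}$ with $(U,t) \geq (U',t')$ iff $U \subseteq U'$ and $t \leq t'$, and for each $\alpha = (U,t)$ choose $x_\alpha \in U$ with $f(x_\alpha) \leq t$. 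The resulting net satisfies $x_\alpha \to x$ and $f(x_\alpha) \to \inf_{\mathcal{X}} f$, producing the required convergent minimizing net.

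There is no substantive obstacle here, since the hard work has already been done in establishing \cref{lemma:twlc} and \cref{lemma:qrgi}; the only care needed is to verify that the common disjunct $f(x) = \inf_{\mathcal{X}} f$ absorbs the degenerate case $f \equiv \infty$ and aligns the two definitions at points where $x$ itself attains the infimum. Accordingly, I expect the written proof to be quite short, perhaps only citing part (1) of \cref{lemma:qrgi} after performing the reduction above.
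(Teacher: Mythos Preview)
Your proposal is correct and matches the paper's approach exactly: the paper's proof is the single line ``Analogous to the proof of \cref{lemma:qrgi},'' and what you have written is precisely that analogy spelled out---dispose of the shared disjunct $f(x)=\inf_{\mathcal X}f$, then invoke part~(1) of \cref{lemma:qrgi} (equivalently, the minimizing-net construction from \cref{lemma:twlc}) for the remaining equivalence. Your handling of the degenerate case $\inf_{\mathcal X}f=\infty$ via the common disjunct is also clean and correct.
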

\begin{proof}
    Analogous to the proof of \cref{lemma:qrgi}. 
\end{proof}

In \cite{nosratabadi2014partially} \texttt{PLC} is defined as a type of continuity for orderings.
This definition can be used to generate an ordering for functions $f:\mathcal X\to\overline{\mathbb R}$ by identifying the ordering $\succeq$ on $\mathcal X$ with 
\[x\succeq y\Longleftrightarrow f(x) \geq f(y).\] 
The following result demonstrates the equivalence of our definition of \texttt{PLC} to the direct translation of the ordering definition to functions.
%The following provides an equivalent definition of \texttt{PLC} that aligns more closely to the original definition from \cite{nosratabadi2014partially}

\begin{proposition}
        $f$ is $\mathtt{PLC}$  iff $f$ has a countable number of jumps
        and 
    for each $x,y\in\mathcal X$ such that $f(y)<f(x)$, where $(x,y)$ is not a jump point, $\exists U\in\mathcal N(x)$ such that $\forall z\in U$, 
    \[f(y) \leq f(z).\] 
\end{proposition}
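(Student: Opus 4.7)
The plan is to observe that the proposed equivalence reduces to two essentially trivial facts once one unpacks the two formulations of the key inequality. First, for any $y\in\mathcal X$ and any $U\in\mathcal N(x)$, the assertion $f(y)\leq \inf_U f$ is, by definition of the infimum, equivalent to $f(y)\leq f(z)$ holding for every $z\in U$. Thus the second clause of the proposed characterization is literally the defining property of $\mathtt{PLC}$, and the $(\Leftarrow)$ direction follows immediately (in fact without ever invoking the countable-jumps hypothesis).

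For the $(\Rightarrow)$ direction, the second clause is again inherited verbatim from $\mathtt{PLC}$, so the only content is to argue that any $f:\mathcal X\to\overline{\mathbb R}$ has at most countably many jumps. I would establish this as a purely combinatorial fact, independent of any continuity assumption. The key step is to show that two distinct jumps must be disjoint as subsets of $\overline{\mathbb R}$. Given jumps $(p,q)$ and $(p',q')$ with non-empty intersection, assume WLOG $p\leq p'$; the overlap forces $p'<q$, so if $p<p'$ then $p'\in f(\mathcal X)$ lies strictly between $p$ and $q$, contradicting that $(p,q)$ is a jump. Hence $p=p'$, and the analogous comparison between $q$ and $q'$ (using that the smaller of them lies in $f(\mathcal X)$ and in the other jump's open interval) forces $q=q'$. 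A family of pairwise disjoint non-empty open subintervals of $\overline{\mathbb R}$ is at most countable, since each one contains a distinct rational point, so the jumps of $f$ are at most countable.

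There is essentially no hard step here: the countability of jumps holds automatically for every function on every topological domain, and the two versions of the key inequality are definitionally identical. The only mildly delicate point worth writing carefully is the case analysis establishing that distinct jumps are disjoint as intervals; everything else is bookkeeping.
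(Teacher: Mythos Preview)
Your proposal is correct and follows the same overall structure as the paper: both observe that the second clause is definitionally identical to \texttt{PLC}, so the only content is to show that \emph{every} function $f:\mathcal X\to\overline{\mathbb R}$ has at most countably many jumps, and both rely on the fact that distinct jumps are pairwise disjoint as open intervals. The difference lies in how countability is concluded from disjointness. You invoke the classical argument that any family of pairwise disjoint non-empty open intervals in $\overline{\mathbb R}$ is countable because each contains a distinct rational. The paper instead argues by contradiction: if uncountably many disjoint jumps existed, uncountably many would lie inside some bounded interval $(-n^*,n^*)$, and then the sum of their lengths would be bounded by $2n^*$, forcing all but countably many to have length zero. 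Your route is shorter and more standard; the paper's route is self-contained but more circuitous. You also spell out the disjointness case analysis explicitly, whereas the paper simply asserts it.
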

\begin{proof}
    It suffices to show that for any set $\mathcal X$ and any function $f:\mathcal X\to\overline{\mathbb R}$, $f$ has only countably many jumps. To obtain a contradiction, assume that $f$ has uncountably many jumps $(p_i,q_i)_{i\in I}$. 
    By definition $p_i<q_i$, and for any $i\neq j$, 
    \[(p_i,q_i)\cap (p_j,q_j)= \emptyset.\] 
    Writing $\aleph_0=|\mathbb N|$, if $\forall n\in\mathbb N$,
    \[|\set{i:(p_i,q_i)\subseteq (-n,n)}| \leq \aleph_0,\] 
    then, because a countable union of countable sets is also countable, we have
    \[|\set{i:(p_i,q_i)\subseteq (-\infty,\infty)}|\leq \aleph_0.\] 
    This implies the contradiction that $|I|\leq \aleph_0$, and thus it must instead be true that there is exists an $n^*\in\mathbb N$, such that 
    \[|\set{i:(p_i,q_i)\subseteq (-n^*,n^*)}| > \aleph_0.\] 

    %Then let $n$ be such that 
    %\[|\set{i:(p_i,q_i)\subseteq (-n,n)}| > \aleph_0\] 
    %and define 
    Letting
    \[J = \set{i:(p_i,q_i)\subseteq (-n^*,n^*)},\] 
    we must then have 
    \[\sum_{i\in J} \left(q_i - p_i\right) \leq 2n^*.\] 
    Because $J$ is uncountable, by properties of uncountable sums, it must be true that for all but finitely many $j\in J$, $q_j-p_j=0$. By definition, $(p_j,q_j)$ is no longer a jump, yielding our result.
    %Hence, $|I|\leq \aleph_0$ as required. 
\end{proof}

\section{Implication diagrams} \label{sec:Diagrams}

Figures \ref{fig:main_point} to \ref{fig:known} constitute our main contribution. In particular, \Cref{fig:main_point} maps the implications between the various continuity conditions at some  $x\in\cal X$, while \cref{fig:main_everywhere} maps the implications when the continuity condition holds $\forall x\in\cal X$. We note that implications that cannot be obtained by traversing the diagrams do not hold in general.
To make clear our contribution, we map the implications and results that have already been reported in the literature in \Cref{fig:known}, with all other implications being novel (to the best of our knowledge). Specific references for each of the known implications and counterexamples are provided in the Appendix.

\begin{figure}[!]
    \centering
    \includegraphics[width=0.9\linewidth]{./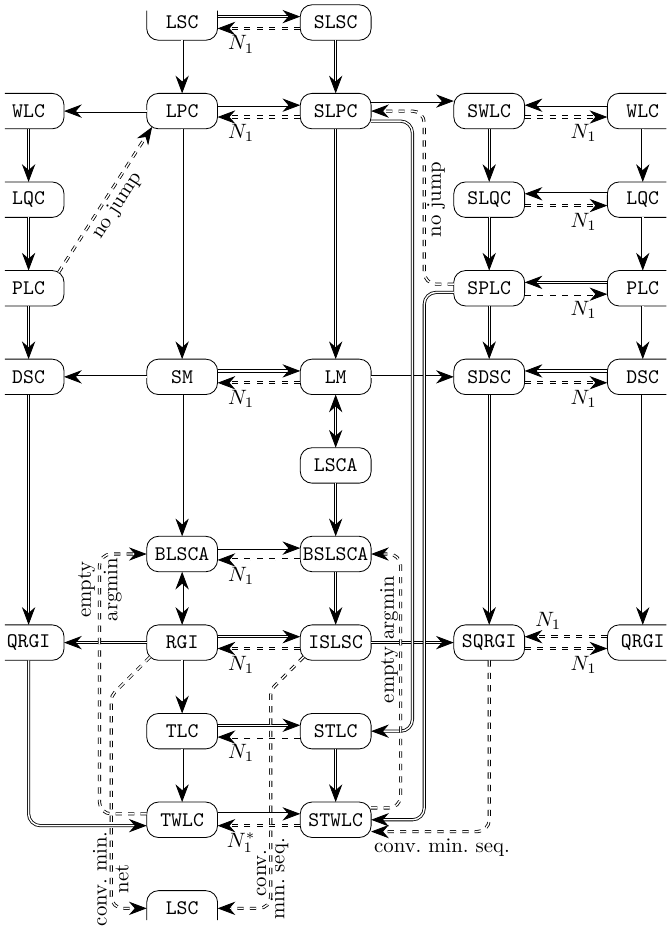}
    \caption{\textbf{Implications between continuity conditions which hold at a point $x\in\mathcal X$.} Solid lines indicate that the implication holds unconditionally, while `$N_1$' indicates that the implication holds when $\mathcal X$ is first countable. `Conv. min. seq/net.' indicates that the implication holds when a minimizing sequence/net converges to $x$, while `no jump' indicates that the implication holds when $f$ doesn't have a jump at $x$. `Empty argmin' means the implication holds if $\argmin(f) = \emptyset$. `$N_1^*$' indicates the implication holds either when $\mathcal X$ is first countable or there is a convergent minimizing sequence.}
    \label{fig:main_point}
\end{figure}

\begin{figure}[!]
    \centering
    \includegraphics[width=0.9\linewidth]{./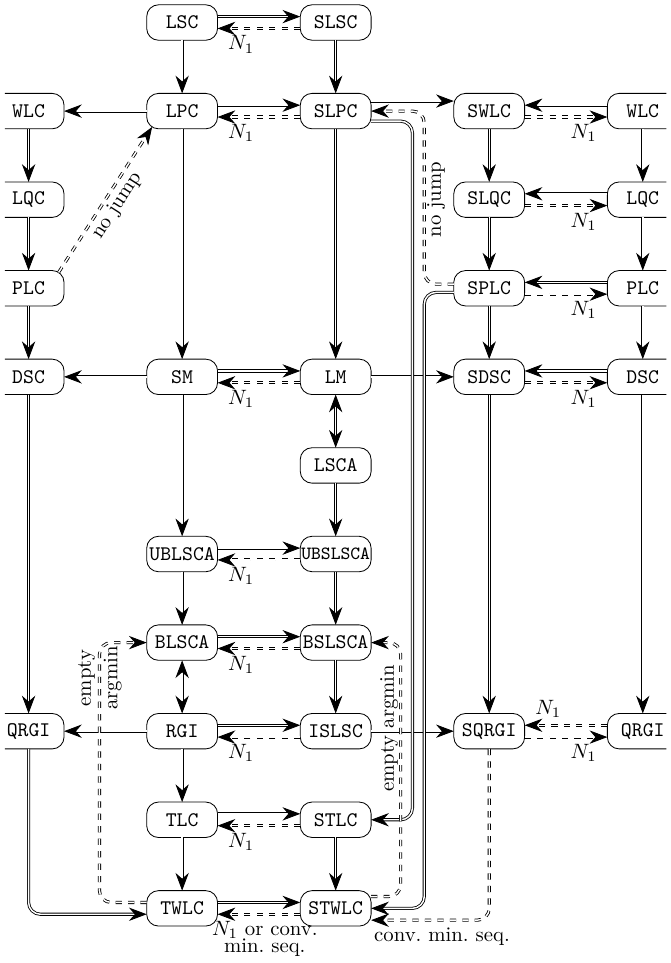}
    \caption{\textbf{Implications between continuity conditions which hold for $\forall x\in\cal X$.} Solid lines indicate that the implication holds unconditionally, while `$N_1$' indicates that the implication holds when $\mathcal X$ is first countable. `Conv. min. seq.' indicate that the implication holds when there is a converging minimizing sequence, while `no jump' indicates that the implication holds when $f$ has no jumps. `Empty argmin' means the implication holds if $\argmin(f) = \emptyset$.}
    \label{fig:main_everywhere}
\end{figure}

%that $\mathcal X$ is a topological space and $f:\mathcal X\to \mathbb R$ is bounded below. 
%The known implications are given.
\begin{figure}[!]
    \centering
    \includegraphics[width=1\linewidth]{./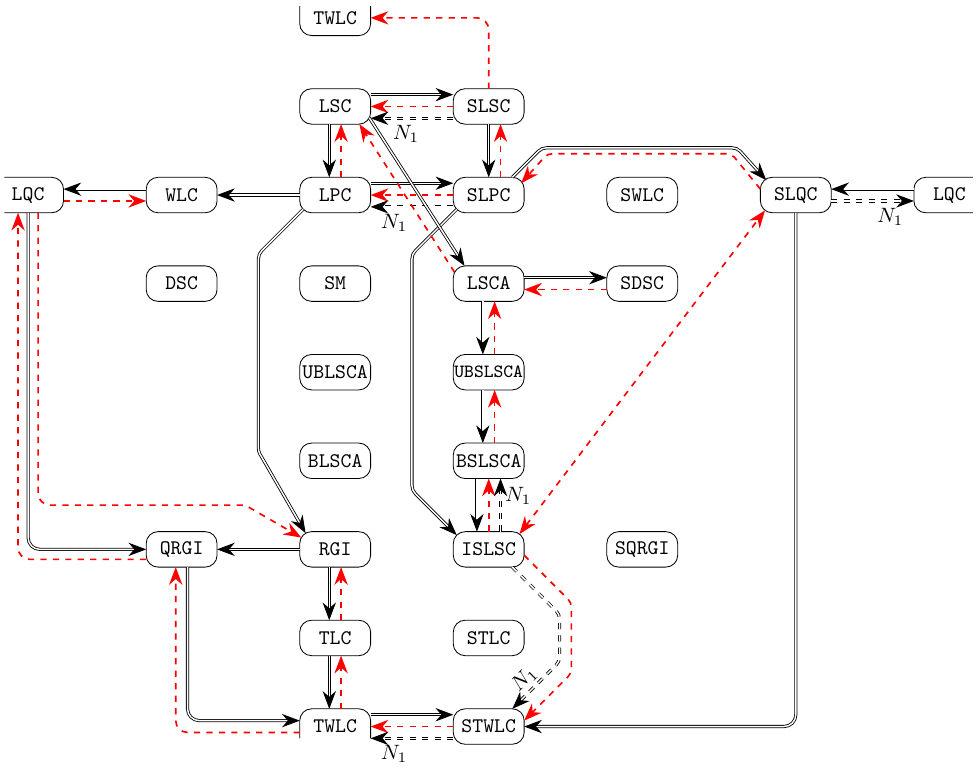}
    \caption{\textbf{Implications and results previously reported regarding continuity conditions that hold $\forall x\in\cal X$}. Black arrows indicate that the implication holds in general, while red arrows indicate that counterexamples exist that prevent the corresponding implication. `$N_1$' indicates that the implication holds when $\mathcal X$ is first countable.}
    \label{fig:known}
\end{figure}

\section{Proofs} \label{sec:Proofs}

\subsection{Implications between continuity conditions which hold at a point} 
In this section, we prove that the diagram in \cref{fig:main_point} is correct. % As in the figure, we write `$N_1$' as short hand for `$\mathcal X$ is first countable'. 
 To begin, note that the following implications are trivial by definition: 

\[
\begin{array}{lll}
(\mathtt{LSC})\Rightarrow(\mathtt{LPC}) & (\mathtt{SLSC})\Rightarrow(\mathtt{SLPC}) & (\mathtt{LPC})\Rightarrow(\mathtt{WLC}) \\
(\mathtt{WLC})\Rightarrow (\mathtt{LQC}) & (\mathtt{WLC})\Rightarrow (\mathtt{SWLC}) & (\mathtt{SLPC})\Rightarrow(\mathtt{SWLC}) \\
(\mathtt{SWLC})\Rightarrow(\mathtt{SLQC}) & (\mathtt{DSC})\Rightarrow(\mathtt{SDSC}) & (\mathtt{SM})\Rightarrow(\mathtt{DSC}) \\
(\mathtt{SM})\Rightarrow(\mathtt{LM}) & (\mathtt{SM})\Rightarrow (\mathtt{BLSCA}) & (\mathtt{LM})\Leftrightarrow(\mathtt{LSCA}) \\
(\mathtt{LSCA})\Rightarrow(\mathtt{BSLSCA}) & (\mathtt{LM})\Rightarrow(\mathtt{SDSC}) & (\mathtt{BLSCA})\Rightarrow(\mathtt{BSLSCA}) \\
(\mathtt{RGI})\Rightarrow(\mathtt{QRGI}) & (\mathtt{RGI})\Rightarrow(\mathtt{TLC}) & (\mathtt{ISLSC})\Rightarrow(\mathtt{SQRGI}) \\
(\mathtt{TLC})\Rightarrow(\mathtt{TWLC}) & (\mathtt{STLC})\Rightarrow(\mathtt{STWLC}) & (\mathtt{TWLC})\Rightarrow(\mathtt{STWLC}) \\
(\mathtt{PLC})\Rightarrow(\mathtt{SPLC}) & (\mathtt{LQC})\Rightarrow(\mathtt{SLQC}) & (\mathtt{LPC})\Rightarrow(\mathtt{SLPC}) \\
(\mathtt{TLC})\Rightarrow(\mathtt{STLC}) & & \\
\end{array}
\]

\begin{comment}
    
\[\begin{array}{llll}
     (\mathtt{LCS})\Rightarrow(\mathtt{LPC}) & (\mathtt{SLSC})\Rightarrow(\mathtt{SLPC})& (\mathtt{LPC})\Rightarrow(\mathtt{WLC})& (\mathtt{WLC})\Rightarrow (\mathtt{LQC})\\
     (\mathtt{WLC})\Rightarrow (\mathtt{SWLC})&  (\mathtt{SLPC})\Rightarrow(\mathtt{SWLC})& (\mathtt{SWLC})\Rightarrow(\mathtt{SLQC})& (\mathtt{DSC})\Rightarrow(\mathtt{SDSC}) \\
     (\mathtt{SM})\Rightarrow(\mathtt{DSC})&  (\mathtt{SM})\Rightarrow(\mathtt{LM})& (\mathtt{SM})\Rightarrow (\mathtt{BLSCA})& (\mathtt{LM})\Leftrightarrow(\mathtt{LSCA})\\
     (\mathtt{LSCA})\Rightarrow(\mathtt{BSLSCA})& (\mathtt{LM})\Rightarrow(\mathtt{SDSC})& (\mathtt{BLSCA})\Rightarrow(\mathtt{BSLSCA})& (\mathtt{RGI})\Rightarrow(\mathtt{QRGI})\\
     (\mathtt{RGI})\Rightarrow(\mathtt{TLC})& (\mathtt{ISLSC})\Rightarrow(\mathtt{SQRGI}) & (\mathtt{TLC})\Rightarrow(\mathtt{TWLC})& (\mathtt{STLC})\Rightarrow(\mathtt{STWLC})\\ 
     (\mathtt{TWLC})\Rightarrow(\mathtt{STWLC})& (\mathtt{PLC})\Rightarrow(\mathtt{SPLC})& (\mathtt{LQC})\Rightarrow(\mathtt{SLQC}) & (\mathtt{LPC})\Rightarrow(\mathtt{SLPC})\\ 
     (\mathtt{TLC})\Rightarrow(\mathtt{STLC}) & & &
\end{array}\] 

\end{comment}

%$(\mathrm{qrgi})\Rightarrow(\mathrm{sqrgi})$ is trivial by \cref{lemma:qrgi} 
$(\mathtt{RGI})\Rightarrow(\mathtt{ISLSC})$ is trivially proved by  \cref{lemma:rgi}.
$(\mathtt{LCS})\Rightarrow(\mathtt{SLSC})$ and $(\mathtt{SLSC}+N_1)\Rightarrow(\mathtt{LSC})$ are well known; see, e.g. \cite[rem 1.3.16]{denkowski2013introduction}.
$(\mathtt{SLQC}+N_1)\Rightarrow(\mathtt{LQC})$ is proved in \cite[prop 1]{scalzo2009uniform}, and $(\mathtt{STWLC}+N_1)\Rightarrow(\mathtt{TWLC})$ is proved in \cite[prop 2.1]{morgan2004new}. \\

\noindent $(\mathtt{SLPC}+N_1)\Rightarrow(\mathtt{LPC})$\\
This implication is proved in  \cite[prop 2.3]{morgan2007pseudocontinuous}, although we take the opportunity to present an alternative proof.
Let $(V_n)_{n\in\mathbb N}$ be a neighbourhood basis for $x$.
From each $V_n$, pick $x_n\in V_n$ such that
\[\inf_{m\geq n} f(x_m)\leq f(x_n)\leq \inf_{V_n} f+\frac1n\text.\]
By definition of \texttt{SLPC}, for any $y\in\mathcal X$ with $f(y) < f(x)$, it holds that
\begin{gather*}
f(y)<\liminf_{n\to\infty} f(x_n)
=\lim_{n\to\infty}\inf_{m\geq n} f(x_m)
\leq \lim_{n\to\infty}\left( \inf_{V_n}f + \frac1n \right)\\
\leq \sup_n\inf_{V_n}f + \lim_{n\to\infty}\frac1n
=\sup_{V\in\mathcal{N}(x)} \inf_V f=\liminf_{z\to x} f(z),
\end{gather*}
as required.

\vspace{\baselineskip}

\noindent $(\mathtt{SWLC}+N_1)\Rightarrow (\mathtt{WLC})$\\ 
    We seek to prove the contrapositive by assuming that $\mathcal X$ is $N_1$ but $f$ is not \texttt{WLC} at $x\in\cal X$. 
    By definition, $\exists y\in\mathcal X$ with $f(y) < f(x)$, such that $\forall U\in\mathcal N(x)$, it holds that %$\exists z\in U$ 
    \[\inf_Uf < f(y)\text.\] 
    Let $(V_n)_{n\in\mathbb N}$ be a neighbourhood base of $x$. 
    %Then for any $\epsilon\downarrow0$ 
    Select $x_n\in V_n$ with $f(x_n) < f(y)$.
    Then, $x_n\to x$, but $\forall n\in \mathbb N$, 
    \[f(x_n) < f(y).\] 
    This implies that $f$ is not \texttt{SWLC}, as required. 

    \vspace{\baselineskip}

    \noindent   $(\mathtt{SPLC}+N_1)\Rightarrow(\mathtt{PLC})$ follows via an analogous argument.

    \vspace{\baselineskip}
    
    \noindent $(\mathtt{LPC})\Rightarrow(\mathtt{SM})$\\ 
    To obtain a contradiction, assume that $f$ is \texttt{LPC} but not \texttt{SM}.
    Because $f$ is not \texttt{SM}, $\exists x_\alpha\to x$, such that $(f(x_\alpha))$ is decreasing and $\exists \beta$, such that
    \[f(x) > f(x_\beta)\text.\]
    Because $(f(x_\alpha))$ is decreasing, this means that $\forall \alpha \geq \beta$, it holds that
    \[f(x) > f(x_\alpha).\] 
    By \texttt{LPC}, this implies that 
    \[\liminf_{\beta} f(x_\beta) > f(x_\alpha),\qquad \forall \alpha\geq \beta.\] 
    But this is impossible since $(f(x_\alpha))$ is decreasing, yielding the required contradiction. %and so we have our required contradiction.\\
    
    \vspace{\baselineskip}
    
    \noindent $(\mathtt{SLPC})\Rightarrow(\mathtt{LM})$ follows via an analogous argument.

    \vspace{\baselineskip}

    \noindent $(\mathtt{SPLC})\Rightarrow(\mathtt{STWLC})$\\ 
    We must consider the two cases:
    \begin{enumerate}
        \item $\exists y\in\mathcal X$ with $f(y)<f(x)$, such that $(x,y)$ is not a jump point, and
        \item $\forall y\in\mathcal X$ with $f(y)<f(x)$, $(x,y)$ is a jump point.
    \end{enumerate}
    The result is trivial for case 1. 
    For case 2, it must be true that 
    \[\left|\set{f(y):y\in\mathcal X,\,f(y)<f(x)}\right|\in \set{0,1},\] 
    or else there would be some $y\in\cal X$, such that $(x,y)$ is not a jump point. 
    This implies that $f$ attains its minima and is therefore \texttt{STWLC}, as required.

    \vspace{\baselineskip}

    %\noindent $(\mathrm{swlc})\nRightarrow(\mathrm{stlc})$ and $(\mathrm{swlc})\nRightarrow(\mathrm{islsc})$\\ 
    %$(\mathrm{lqc})\nRightarrow(\mathrm{stlc})$ and $(\mathrm{lqc})\nRightarrow(\mathrm{islsc})$ had $\mathcal X$ first countable so the same counter-example holds here.\\

    \noindent Before continuing, we require the following result.
    We say that $(x_\alpha)_{\alpha\in A}$ is non-trivial, if $\forall \alpha \in A$, $\exists \beta\in A$ with $\beta > \alpha$. That is, a net is trivial if $A$ contains a maximal element.
    Further, we say that $(x_\alpha)$ is eventually constant if $\exists \beta$, such that $\forall \alpha \geq \beta$, $x_\alpha = x_\beta$. 
    \begin{lemma} \label{lemma:decsub}
        Let $(x_\alpha)_{\alpha\in I}\subseteq \mathbb R$ be a net, with $x_\alpha \to \inf_\alpha x_\alpha$. Then, either $(x_\alpha)$ is eventually constantly equal to  $\inf_\alpha x_\alpha$, or $(x_\alpha)$ has a non-trivial, strictly decreasing subnet. 
    \end{lemma}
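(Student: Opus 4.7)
My plan is to set $L := \inf_\alpha x_\alpha$, assume $(x_\alpha)$ is not eventually constantly equal to $L$, and construct an explicit strictly decreasing, non-trivial subnet by reindexing through a carefully chosen subset of $I \times \mathbb R$. The enabling cofinality fact I would prove first is that $C := \{\alpha \in I : x_\alpha > L\}$ is cofinal in $I$: since $x_\alpha \geq L$ everywhere, failing to be eventually $L$ forces $C$ to be cofinal; combining this with $x_\alpha \to L$, for every real $r > L$ the intersection $C \cap \{\alpha : x_\alpha < r\}$ remains cofinal (the intersection of a cofinal set with an eventual set is cofinal). Consequently, from any level $\alpha_0 \in I$ one can extract $\alpha \geq \alpha_0$ with $L < x_\alpha < r$.

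Next I would define
\[J = \{(\alpha, r) \in I \times \mathbb R : r > L \text{ and } L < x_\alpha < r\}\]
(when $L = -\infty$, both lower bounds on $r$ and $x_\alpha$ are vacuous), equipped with the preorder $(\alpha_1, r_1) \preceq (\alpha_2, r_2)$ iff $\alpha_1 \leq \alpha_2$ in $I$ and either the pairs coincide or $r_2 \leq x_{\alpha_1}$. Setting $\phi(\alpha, r) = \alpha$ produces the candidate subnet $y_{(\alpha, r)} = x_\alpha$. The four routine checks — reflexivity and transitivity of $\preceq$, directedness and non-triviality of $J$, cofinal-monotonicity of $\phi$ in Willard's sense, and strict decrease of $(y_\beta)$ along $\prec$ — all reduce uniformly to the cofinality fact above. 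The strict-decrease punchline is that any $(\alpha_1, r_1) \prec (\alpha_2, r_2)$ chains $x_{\alpha_2} < r_2 \leq x_{\alpha_1}$ by combining the second clause of the order with the $J$-membership bound $x_{\alpha_2} < r_2$.

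The hard part is calibrating the preorder on $J$ correctly. Too coarse — comparing only the $I$-coordinate, or only the $r$'s — and one loses either transitivity or strict value decrease; too fine and directedness of $J$ fails. The clause $r_2 \leq x_{\alpha_1}$ is the pivotal piece: transitivity leverages $x_{\alpha_2} < r_2$ (built into $J$) to chain $r_3 \leq x_{\alpha_2} < r_2 \leq x_{\alpha_1}$, and the very same chain delivers the strict decrease of values required of the subnet, so a single design choice underwrites both properties simultaneously.
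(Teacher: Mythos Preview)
Your argument is correct and follows essentially the same approach as the paper's: both build a two-coordinate directed index set over $\{\alpha:x_\alpha>L\}$ whose order simultaneously refines the $I$-order and forces a strict drop in values, then project back to $I$ via the first coordinate. The only difference is bookkeeping---the paper tags each $\alpha$ with the value $x_\alpha$ itself and declares $(\alpha,x_\alpha)\succ(\alpha',x_{\alpha'})$ when $\alpha>\alpha'$ and $x_\alpha<x_{\alpha'}$, whereas you decouple the second coordinate into a free threshold $r>x_\alpha$ and use the clause $r_2\le x_{\alpha_1}$; the mechanism is the same.
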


    \begin{proof}
        When $(x_\alpha)$ is eventually constant (which includes the special case where the net $(x_\alpha)$ is trivial), the result is immediate.
        We thus restrict our analysis to the case when $(x_\alpha)$ is non-trivial and not eventually constant. 
        Because any subnet of a non-trivial net is also non-trivial (otherwise, the cofinal property will be violated), the result is proved if we can show that there is a strictly decreasing subnet. 

        Let 
        \[J = \set{(\alpha,x_\alpha):\alpha\in I,\,x_\alpha > \inf_\beta x_\beta},\] 
        and define the order $\succ$ on $J$ by
        \[(\alpha,x_\alpha)\succ (\alpha',x_{\alpha'})\Longleftrightarrow \alpha>\alpha',\, x_\alpha < x_{\alpha'}.\]
        Further, define the order $\succeq$ on $J$ by 
        \[(\alpha,x_\alpha)\succeq (\alpha',x_{\alpha'})\Longleftrightarrow \alpha = \alpha' \text{ or } (\alpha,x_\alpha)\succ (\alpha',x_{\alpha'}).\] 
        We note that $\succeq$ is a pre-order and will show that $\succeq$ is directed. 
        Take $(\alpha,x_\alpha),(\beta,x_\beta)\in J$. 
        Then, because $x_\alpha \to \inf_\alpha x_\alpha$, $\exists \gamma$ such that $\forall \lambda \geq \gamma$, 
        \[x_\lambda < \min\set{x_\alpha,x_\beta}.\] 
        Observe that there must exist $\lambda \geq \alpha,\beta,\gamma$, such that $x_\lambda > \inf_\alpha x_\alpha$, or else $(x_\alpha)$ would eventually be constant. 
        Taking any such $\lambda$, we obtain 
        \[(\lambda,x_\lambda) \succeq (\alpha,x_\alpha),(\beta,x_\beta).\] 
        
        Now let $\phi:J\to I$ be given by 
        \[\phi(\alpha,x_\alpha)=\alpha.\] 
        By definition of $\succeq$, this map is monotone increasing. 
        Because $(x_\alpha)$ is not eventually constant, $\forall \alpha \in I$, $\exists \beta\geq \alpha$, such that $x_\beta \neq \inf_\alpha x_\alpha$. 
        This implies that $\phi$ is also cofinal.
        Finally, by definition of $\succeq$, $x\circ \phi$ is strictly decreasing, as required.
    \end{proof}

    \noindent $(\mathtt{DSC})\Rightarrow(\mathtt{QRGI})$\\ 
    If there is no minimizing net converging to $x$, then  $f$ is \texttt{QRGI} by \cref{lemma:qrgi}. 
    %We will show that if there is a minimiizng net converging to $x$, then there is a net $x_\alpha \in\mathrm{argmin}(f)$ with $x_\alpha \to x$.
    %The result will follow by \cref{lemma:qrgi}. 
    Thus, assume there is a minimizing net $(x_\alpha)$ converging to $x$,
    if $(f(x_\alpha))$ is eventually constant, then the result follows by \cref{lemma:qrgi}.
    If not, then by \cref{lemma:decsub}, there is a subnet $(x_{\alpha_\beta})$ such that $(f(x_{\alpha_\beta}))$ is strictly decreasing.
    Because $f$ is \texttt{DSC},
     \[f(x)\leq \lim_\beta f(x_{\alpha_\beta}) = \inf_{\mathcal X}f.\] 
    That is, $f(x) = \inf_{\mathcal X}f$ and thus $f$ is trivially \texttt{LSC} at $x$. In particular, it follows that $f$ is \texttt{QRGI} at $x$. %the sequence $x_n = x$ is a net converging to $x$ with $x_n \in \argmin(f)$.
    %The result then follows by \cref{lemma:qrgi}.\\ 
    
    \vspace{\baselineskip}

    \noindent $(\mathtt{SDSC})\Rightarrow(\mathtt{SQRGI})$ follows via an analogous argument.
    
    \vspace{\baselineskip}

    \noindent For the next set of implications, we require the following result.
    \begin{lemma} \label{lemma:subnet}
        Let $\mathcal X$ be $N_1$, $f:\mathcal X\to \overline{\mathbb R}$, and  $(x_\alpha)_{\alpha\in I}\subseteq \mathcal X$. If $x_\alpha \to x$ and $f(x_\alpha)\to c$ then there is an increasing map $\phi:I\to \mathbb N$ such $x(\phi(n))\to x$ and $f(x(\phi(n)))\to c$. 
        If $(x_\alpha)$ is non-trivial, we can additionally take $\phi$ strictly increasing. 
    \end{lemma}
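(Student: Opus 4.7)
The plan is a straightforward inductive construction that uses first countability to extract a sequence with the required behaviour from a net. Note that the displayed map is best read as $\phi:\mathbb N\to I$ (so that $x_{\phi(n)}$ is meaningful); this is the map I aim to construct.

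First, since $\mathcal X$ is $N_1$, fix a decreasing neighbourhood basis $V_1\supseteq V_2\supseteq\cdots$ of $x$. Because $x_\alpha\to x$, for each $n\in\mathbb N$ there exists $\alpha_n\in I$ such that $x_\beta\in V_n$ whenever $\beta\geq \alpha_n$. Because $f(x_\alpha)\to c$ in $\overline{\mathbb R}$, for each $n$ there is an $\alpha_n'\in I$ such that $f(x_\beta)\in U_n$ for all $\beta\geq\alpha_n'$, where $U_n$ is a shrinking basis of open neighbourhoods of $c$ in $\overline{\mathbb R}$ (e.g.\ $(c-1/n,c+1/n)$ when $c\in\mathbb R$, $(n,\infty]$ when $c=\infty$, and $[-\infty,-n)$ when $c=-\infty$).

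Now construct $\phi$ inductively using the directedness of $I$: set $\phi(1)$ to be any upper bound in $I$ of $\{\alpha_1,\alpha_1'\}$, and having chosen $\phi(n)$, let $\phi(n+1)$ be any upper bound of $\{\phi(n),\alpha_{n+1},\alpha_{n+1}'\}$. Then $\phi(n+1)\geq \phi(n)$, so $\phi$ is increasing; moreover $x_{\phi(n)}\in V_n$ and $f(x_{\phi(n)})\in U_n$ for every $n$, so $x_{\phi(n)}\to x$ and $f(x_{\phi(n)})\to c$, as required.

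For the strict case, assume $(x_\alpha)$ is non-trivial, so that for every $\alpha\in I$ there is some $\beta>\alpha$. Then at each inductive step additionally pick $\beta_n>\phi(n)$ and take $\phi(n+1)$ to be an upper bound of $\{\phi(n),\beta_n,\alpha_{n+1},\alpha_{n+1}'\}$; this forces $\phi(n+1)\geq\beta_n>\phi(n)$, delivering strict increase. The only subtlety, and the one place to be careful, is that in a general directed set $\geq$ is merely a pre-order, so strict inequality must be interpreted as $\phi(n+1)\geq\phi(n)$ together with $\phi(n+1)\neq\phi(n)$; the construction plainly satisfies this. Otherwise the argument is purely routine, the main obstacle being the bookkeeping that interleaves control of $V_n$ and $U_n$ in a single upper bound at each step.
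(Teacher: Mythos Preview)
Your proof is correct and follows essentially the same approach as the paper's: fix countable neighbourhood bases $(V_n)$ of $x$ and $(U_n)$ of $c$, pick tail indices $\alpha_n,\alpha_n'$ controlling membership in $V_n,U_n$, and build $\phi$ recursively by taking upper bounds (strict ones in the non-trivial case). Your observation that the map should be read as $\phi:\mathbb N\to I$ and your remark on the pre-order interpretation of strictness are welcome clarifications, but the underlying argument is the same.
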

    \begin{proof}
        Let $(V_n)$ be a neighbourhood base of $x$ and let $(U_n)$ be a neighbourhood base of $c$. 
        Then $\forall n\in\mathbb N$, $\exists \beta_n\in I$ such that $\forall \alpha \geq \beta_n$, $x\in V_n$.
        Similarly, $\forall n\in\mathbb N$ $\exists \gamma_n\in I$ such that $\forall \alpha \geq \gamma_n$, $f(x_\alpha)\in U_n$. 
        Pick $\phi(1)$ arbitrarily and recursively take $\phi(n) \geq \beta_n,\gamma_n, \phi(n-1)$. 
        $\phi$ is then increasing, $x(\phi(n))\to x$ and $f(x(\phi(n)))\to c$.

        If $(x_\alpha)$ is non-trivial we can instead take $\phi(n)> \beta_n,\gamma,\phi(n-1)$ to make $\phi$ strictly increasing.
    \end{proof}
    
    \vspace{\baselineskip}
    
    \noindent $(\mathtt{SDSC}+N_1)\Rightarrow(\mathtt{DSC})$\\
    Take any $x_\alpha\to x$ with $(f(x_\alpha))$ strictly decreasing.
    If $(x_\alpha)$ is trivial, then 
    \[\lim_\alpha f(x_\alpha) = f(x)\] 
    and the definition of \texttt{DSC} is satisfied.
    %Hence, assume that $(x_\alpha)$ is non-trivial
    If $(x_\alpha)$ is non-trivial, \cref{lemma:subnet} implies that there is a sequence $(x_n)$ with $\set{x_n}\subseteq \set{x_\alpha}$, where $(f(x_n))$ is strictly decreasing, $x_{n}\to x$ and $\lim_\alpha f(x_\alpha) = \lim_{n}  f(x_n)$.
    Because $f$ is \texttt{SDSC},
    \[f(x)\leq \lim_{n\to\infty} f(x_{\alpha_n}) = \lim_\alpha f(x_\alpha).\]
    And because $(x_\alpha)$ was arbitrarily chosen, we conclude that $f$ is \texttt{SM}, as required.

    \vspace{\baselineskip}

    \noindent $(\mathtt{LM}+N_1)\Rightarrow(\mathtt{SM})$ and $(\mathtt{BSLSCA}+N_1)\Rightarrow(\mathtt{BLSCA})$ follow via analogous arguments. 
    
    \vspace{\baselineskip}

    \noindent $(\mathtt{QRGI})\Rightarrow(\mathtt{TWLC})$\\ 
    From the definition of \texttt{QRGI}, either 
    \begin{enumerate}
        \item $\forall V\in\mathcal N(x)$, $\inf_{\mathcal X}f \in f(V)$, or
        \item $\exists U\in\mathcal N(x)$, such that $\inf_{U}f > \inf_{\mathcal X}f$.
    \end{enumerate}
    (1) implies that the infimum is attained; thus, $f$ is trivially \texttt{TWLC}. 
    If (2) is true, then \texttt{TWLC} is verified by the existence of $y\in\mathcal{X}$ such that $\inf_{\mathcal{X}} f < f(y) < \inf_U f$.

    \vspace{\baselineskip}

    \noindent $(\mathtt{SQRGI}+N_1)\Rightarrow(\mathtt{QRGI})$\\ 
    Because $f$ is \texttt{SQRGI}, either 
    \begin{enumerate}
        \item there is no minimizing sequence converging to $x$, or
        \item $\exists (x_n)\subset\argmin(f)$, such that $x_n\to x$.
    \end{enumerate}
We proceed by showing that
\begin{itemize}
\item $N_1$ + (1) $\implies$ \texttt{QRGI} condition (b); i.e., $\exists U\in\mathcal{N}(x)$
such that $\inf_U f>\inf_{\mathcal{X}}f$;
\item (2) $\implies$ \texttt{QRGI} condition (a); i.e., $\forall V\in \mathcal{N}(x)$, $\inf_{\mathcal{X}} f\in f(V)$.
\end{itemize}
The second bullet is trivial and we do not need the $N1$ assumption here, since
the sequence $(x_n)$ would eventually enter $V$ by the definition of convergence.

For the first bullet, we give the proof by contradiction.
Indeed, suppose that $\inf_U f=\inf_{\mathcal{X}} f$, $\forall U\in\mathcal{N}(x)$.
Because $\mathcal X$ is $N_1$, there is a countable neighborhood basis $(U_n)$ of $x$.
For each $n\geq 1$, take $y_n\in U_n$ such that $f(y_n)<\inf_{U_n} + \frac1n=\inf_{\mathcal{X}}f +\frac1n$.
This gives $y_n\to x$ and $f(y_n)\to \inf_{\mathcal{X}}f$, contradicting the above item
(1) of \texttt{SQRGI}.

    \vspace{\baselineskip}

    \noindent $(\mathtt{QRGI}+N_1)\Rightarrow (\mathtt{SQRGI})$\\ 
    By \cref{lemma:qrgi}, either 
    \begin{enumerate}
        \item there is no minimizing net converging to $x$, or
        \item there is a net $(x_\alpha) \subset \argmin_{\mathcal X}f$ with $x_\alpha\to x$.
    \end{enumerate}
    If (1) is true, no minimizing sequence converges to $x$. Thus, $f$ is \texttt{SQRGI}. 
    If (2) is true, then by \cref{lemma:subnet}, there is a sequence $(x_n)\subset\argmin_{\mathcal X}f$ with $x_n\to x$, and the conclusion follows. 

    \vspace{\baselineskip}

    \noindent $(\mathtt{SQRGI}+\mathrm{conv.}\,\mathrm{min.}\,\mathrm{seq})\Rightarrow(\mathtt{STWLC})$\\ 
    Because there is a minimizing sequence converging to $x$, \texttt{SQRGI} implies that the infimum is attained, and thus
    $f$ is trivially \texttt{STWLC}.  

    \vspace{\baselineskip}

    \noindent $(\mathtt{BLSCA})\Rightarrow(\mathtt{RGI})$\\ 
    If there is no minimizing net converging to $x$, then $f$ is \texttt{RGI} by \cref{lemma:rgi}.
    %So assuming otherwise let $(x_\alpha)$ be such a net with limit $x$.
    Thus, it suffices to assume that a minimizing net $(x_\alpha)$ converges to $x$.
    By passing to a subnet, \cref{lemma:decsub} implies that we can assume, without loss of generality, that $(f(x_\alpha))$ is decreasing and thus for any $a>\inf_{\mathcal X}f$, $f(x_\alpha) \leq a$ for $\alpha$ sufficiently large.
    Then, by \texttt{BLSCA}, 
    \[\lim_\alpha f(x_\alpha) \geq f(x)\Longrightarrow f(x) = \inf_{\mathcal X}f,\] 
    %By \cref{lemma:rgi} this means $f$ is rgi. \\
    and we may conclude that $f$ is \texttt{RGI}.

    \vspace{\baselineskip}
    \noindent $(\mathtt{BSLSCA})\Rightarrow(\mathtt{ISLSC})$ via an analogous argument.

    \vspace{\baselineskip}

    \noindent $(\mathtt{RGI})\Rightarrow(\mathtt{BLSCA})$\\ 
    If $f(x) = \inf_{\mathcal X}f$, then $f$ is trivially \texttt{BLSCA} at $x$, and 
    if $f(x) > \inf_{\mathcal X}f$, then by \texttt{RGI}, $\exists U\in\mathcal N(x)$ such that 
    \[\inf_{U}f > \inf_{\mathcal X}f.\] 
    But for any $a\in (\inf_{\mathcal X}f, \inf_Uf)$,
    there does not exist a net $(x_\alpha)$ with $x_\alpha \to x$, such that $f(x_\alpha)\leq a$.
    Therefore, the definition of \texttt{BLSCA} is vacuously satisfied.

     \vspace{\baselineskip}
    
    \noindent $(\mathtt{ISLSC}+N_1)\Rightarrow(\mathtt{RGI})$\\ 
    %Assume for the sake of contradiction that $f$ is islsc and $\mathcal X$ is N_1, but $f$ is not rgi. 
    %This means $f(x) >\inf_{\mathcal X}f$ but $\forall U\in\mathcal N(x)$, $\inf_{U}f = \inf_{\mathcal X}f$. 
    %Let $(V_n)_{n\in\mathbb N}$ denote a neighbourhood basis of $x$. 
    %Let $(y_n)\subseteq f(\mathcal X)$ be such that $y_n\to \inf_{\mathcal X}f$.
    %Then $\forall n\in\mathbb N$ take $x_n\in V_n$ such that 
    %\[f(x_n) \leq y_n\] 
    %Then 
    %%\[\lim_{n\to\infty} f(x_n) \leq \lim_n\ab{\inf_{\mathcal X}f + \epsilon_n} = \inf_{\mathcal X}f\] 
    %%\[\Longrightarrow \lim_{n\to\infty} f(x_n) = \inf_{\mathcal X}f\] 
    %\[\lim_{n\to\infty} f(x_n) = \inf_{\mathcal X}f\]
    %But $x_n\to x$ so by islsc, 
    %\[\lim_n f(x_n) = f(x)\] 
    %That is, 
    %\[\inf_{\mathcal X}f = f(x) > \inf_{\mathcal X}f\]
    %, and so we have a contradiction. \\ 
    %
    If $f(x) = \inf_{\mathcal X}f$, then we have the required conclusion. %by \cref{lemma:rgi} so assume otherwise.
    By \cref{lemma:rgi}, it, therefore, suffices to show there is no convergent minimizing net. 
    If there was such a net, then \cref{lemma:subnet} implies a minimizing sequence converging to $x$.
    But this contradicts the definition of  \texttt{ISLSC}, and the implication follows.

    \vspace{\baselineskip}

    \noindent $(\mathtt{STLC}+N_1)\Rightarrow(\mathtt{TLC})$\\ 
    To prove the contrapositive, we assume that $\mathcal X$ is $N_1$ but $f$ is not \texttt{TLC}. 
    This implies that $f(x) > \inf_{\mathcal X}f$ and $\forall U\in \mathcal N(x)$, $\forall y\in \mathcal X$, $\exists z\in U$ with $f(z) \leq f(y)$. 
    %We will prove this in 2 cases 
    %\begin{enumerate}
    %    \item $f$ attains it's minimum 
    %    \item $f$ doesn't attain its minima. 
    %\end{enumerate}
    %(1) $f$ not being tlc at $x$ means $\forall U\in\mathcal N(x)$, $\exists z\in \mathcal U$ with $f(z) = \inf_{\mathcal X}f$. 
    %Let $(V_n)_{n\in\mathbb N}$ be a neighbourhood base of $\mathcal X$.
    %Then by assumption $\exists z_n\in V_n$ such that $f(z_n) = \inf_{\mathcal X}f$.
    %We then have $z_n\to x$ so $f$ is not stlc as required. \\
    %(2) Let $y_n\in\mathcal X$ be such that $f(y_n) \to \inf_{\mathcal X}f$ and again let $(V_n)_{n\in\mathbb N}$ be a neighbourhood base of $x$.
    %By assumption $\forall n\in\mathbb N$ $\exists z_n\in V_n$ such that $f(z_n) \leq f(y_n)$. 
    %We hence have $f(z_n)\to \inf_{\mathcal X}f$ and $z_n\to x$. 
    %However, because $f$ does not attain its minima, $\forall y\in\mathcal X$, $\exists N\in\mathbb N$ such that $\forall n>N$, $f(z_n) \leq f(y)$. 
    %That is, $f$ is not stlc. 
    Consider the set 
    \[J = \set{(U,y): U\in\mathcal N(x),\, y\in f(\mathcal X)},\] 
    with the order
    %\textcolor{purple}{[BELOW, I think it should be $y\leq y'$.]}
    \[(U,y)\succeq (U',y')\Leftrightarrow U\subseteq U',\, y\leq y',\]    
    and note that this ordering is directed.
    Then, $\forall \alpha = (U,y)\in J$, take $x_\alpha \in U$ with $f(x_\alpha) \leq y$. 
    We have 
    \[x_\alpha \to x\qquad f(x_\alpha) \to \inf_{\mathcal X}f,\] 
    and by \cref{lemma:subnet}, there is a sequence $x_n\to x$ with $f(x_n)\to \inf_{\mathcal X}f$. %subsequence $x_{\alpha_n}\to x$ with $f(x_{\alpha_n})\to \inf_{\mathcal X}f$
    %This will also give 
    %\[f(x_{\alpha_n})\to\inf_{\mathcal X}f\] 
    Because $(x_n)$ is minimizing, we have
    \[f(x_n) \leq y,\] 
    $\forall y\in f(\mathcal X)$ and $n$ sufficiently large.
    This implies that $f$ is not \texttt{STLC}, as required.\\

    \noindent $(\mathtt{PLC})\Rightarrow(\mathtt{DSC})$\\ 
    Let $(x_\alpha)$ be a net converging to $x$ with $(f(x_\alpha))$ strictly decreasing. 
    If $(x_\alpha)$ is trivial, then we trivially have
    \[f(x) = \lim_\alpha f(x_\alpha).\] 
    It thus suffices to assume that $(x_\alpha)$ is non-trivial.
    To obtain a contradiction, assume that
    \[\lim_\alpha f(x_\alpha) < f(x).\] 
    Then $\exists \gamma$, such that $\forall \alpha \geq \gamma$,
    \[f(x_\alpha) < f(x).\] 
    %By reindexing the net, we can assume, without loss of generality, that $\forall \alpha$, 
    %\[f(x_\alpha) < f(x).\]  
    For any $\beta > \gamma$, because $(f(x_\alpha))$ is strictly decreasing, we have that $(x, x_\beta)$ is not a jump point.
    %If there is a $\beta > \alpha$ then because $(f(x_\alpha))$ is strictly decreasing, we have $(x, x_\beta)$ is not a jump point.  
    %it cannot be that $\exists \alpha$ such that $(x, x_\alpha)$ forms a jump point. 
    %That is, $\forall \alpha$, $(x,x_\alpha)$ is not a jump. 
    %Because $(x_\alpha)$ is non-trivial, by re-indexing again we can assume that $\forall \alpha$, $(x,x_\alpha)$ is not a jump point.
    By definition of \texttt{PLC}, $\exists U\in\mathcal N(x)$ such that, 
    \[f(x_\beta) \leq \inf_U f.\] 
    But, because $(x_\alpha)$ is non-trivial and converges to $x$, $\exists \gamma> \beta$, such that $x_\gamma \in U$.
    We therefore have 
    \[f(x_\gamma) < f(x_\beta) \leq f(x_\gamma),\] 
    and thus our required contradiction.
    
    \vspace{\baselineskip}

    \noindent $(\mathtt{SPLC})\Rightarrow(\mathtt{SDSC})$ follows via an analogous argument.
    
    \vspace{\baselineskip}

    \noindent $(\mathtt{LQC})\Rightarrow(\mathtt{PLC})$\\ 
    Assume there is some $y\in\mathcal{X}$ such that $f(y)<f(x)$ and $(x,y)$ is not a jump point, else the result is trivial. 
    Then there exists some $y'\in\mathcal{X}$ such that
    \begin{gather*}
    f(y)<f(y')<f(x).
    \end{gather*}
    From \texttt{LQC},
    \begin{gather*}
    f(y)<f(y')\leq\liminf_{z\to x}f(z).
    \end{gather*}
    So there exists some $U\in \mathcal{N}(x)$ such that
    \begin{gather*}
    f(y)\leq \inf_U f,
    \end{gather*}
    which completes the proof.
    
    \vspace{\baselineskip}
    
    \noindent $(\mathtt{SLQC})\Rightarrow(\mathtt{SPLC})$ follows via an analogous argument.

    \vspace{\baselineskip}

    \noindent 
    $(\mathtt{PLC}+\text{no. jump})\Rightarrow(\mathtt{LPC})$\\ 
    Take any $y\in\mathcal X$ with $f(y)<f(x)$. 
    Because $f$ has no jumps at $x$, $\exists y'\in\mathcal X$ such that 
    \[f(y) < f(y') < f(x).\] 
    Because $f$ is \texttt{PLC}, $\exists U\in\mathcal N(x)$ such that 
    \[f(y') \leq \inf_Uf.\] 
    In particular 
    \[f(y) < f(y') \leq \liminf_{z\to x} f(z)\] 
    and thus $f$ is \texttt{LPC}, as required. 

    \vspace{\baselineskip}

    \noindent
    $(\mathtt{SPLC}+\text{no jump})\Rightarrow(\mathtt{SLPC})$ follows via an analogous argument.
    
    \vspace{\baselineskip}

    \noindent 
    $(\mathtt{RGI}+\text{conv. min. net})\Rightarrow(\mathtt{LSC})$\\ 
    %If $f$ is swlc as $x$, and $x$ is the limit of a minimizing net, then $x\in\mathrm{argmin}(f)$.
    %$f$ is then trivially lsc.\\ 
    Because $f$ is \texttt{RGI}, either 
    \begin{enumerate}
        \item $f(x)=\inf_{\mathcal X}f$, or
        \item $\exists U\in\mathcal N(x)$ such that $\inf_{U}f > \inf_{\mathcal X}f$.
    \end{enumerate}
    Because there is a minimizing net converging to $x$, case (2) cannot be true. 
    It then follows that $f(x)=\inf_{\mathcal X}f$, and therefore $f$ is trivially \texttt{LSC} at $x$.

    \vspace{\baselineskip}

    \noindent
    $(\mathtt{ISLSC}+\text{conv. min. seq.})\Rightarrow(\mathtt{LSC})$\\ 
    Because $f$ is \texttt{ISLSC}, either 
    \begin{enumerate}
        \item $f(x)=\inf_{\mathcal X}f$, or
        \item there is no minimizing sequence converging to $x$.
    \end{enumerate}
    By assumption, case (1) is true; thus, $f$ is trivially \texttt{LSC} at $x$.

    %\noindent $(\mathrm{dsc}+\text{conv.min.net})\Rightarrow(\mathrm{lsc})$\\ 
    %%If the convergent minimizing net is trivial, then we trivially have $f$ attains its minima at $x$ and is hence lsc at $x$.
    %%If then net is non-trivial then because $f$ is dsc it again attains it's minima at $x$ and is hence lsc at $x$.\\
    %Let $(x_\alpha)$ be a minimizing net converging to $x$.
    %By \cref{lemma:decsub}, either $f(x_\alpha)$ is eventually constant at $\inf_{\mathcal X}f$, or there is a subnet $(x_{\alpha_\beta})$ such that $f(x_{\alpha_\beta})$. 
    %In this section case because $f$ is dsc, $f$ attains it's minima at $x$. 
    %
    %\noindent $(\mathrm{sdsc}+\text{conv. min. seq})\Rightarrow(\mathrm{lsc})$\\ 
    %Because $f$ is sdsc $f$ attains its minima at $x$. $f$ is then trivially lsc at $x$.\\
    \vspace{\baselineskip}

    \noindent
    $(\mathtt{TWLC}+\text{empty }\argmin)\Rightarrow(\mathtt{BLSCA})$\\ 
    This implication appears in \cite[fig 1]{amini2016some}, although we take the opportunity to provide an alternative proof.
    Because $\argmin(f)=\emptyset$, \texttt{TWLC} implies that $\exists y\in\mathcal X$ and $\exists U\in\mathcal N(x)$, such that 
    \[\inf_{\mathcal X}f < f(y) \leq \inf_{U}f.\] 
    %This is the definition of rgi.\\ 
    The definition of
%\textcolor{purple}{[REPLACE ``The definition of'' WITH ``Axiom''?]}
    \texttt{BLSCA} is then vacuously satisfied
    by taking any $a\in (\inf_{\mathcal X}f, f(y))$. 

    \vspace{\baselineskip}

    \noindent
    $(\mathtt{STWLC}+\text{empty }\argmin)\Rightarrow(\mathtt{BSLSCA})$ follows via an analogous argument.

    \vspace{\baselineskip}

    \noindent $(\mathtt{STWLC}+\text{conv. min. seq})\Rightarrow(\mathtt{TWLC})$\\ 
    Let $(x_n)$ be a minimizing sequence converging to $x$. Then by \texttt{STWLC} there is some $y\in\mathcal X$ such that 
    \[\inf_{\mathcal X}f \leq f(y) \leq \liminf_{n\to\infty} f(x_n) = \inf_{\mathcal X}f\text.\] 
    That is, the infimum of $f$ is attained at $y$, and so $f$ is \texttt{TWLC}.

    \vspace{\baselineskip}

    \noindent $(\mathtt{SLPC})\Rightarrow(\mathtt{STLC})$\\ 
    If $f(x)=\inf_\mathcal Xf$, then $f$ is trivially \texttt{STLC}. 
    On the other hand, if $f(x) > \inf_{\mathcal X}f$, then there exists a $y\in\mathcal X$ such that $f(y) < f(x)$. 
    By definition of \texttt{SLPC}, for any sequence $(x_n)$ such that $x_n\to x$, it holds that 
    \[f(y) < \liminf_{n\to\infty} f(x_n).\] 
    In particular, for sufficiently large $n$, $f(x) < f(x_n)$, implying that $f$ is \texttt{STLC}.

\subsection{Implications between continuity conditions over the entire domain}
This section argues that the diagram in \cref{fig:main_everywhere} is correct. %As in the figure, we write `$N_1$' as short hand for `first countable'.
Most implications are obtained by applying the results in \cref{fig:main_point} at every $x\in\mathcal X$. 
%The only additional implications involve ublsca and ubslsca, as these cannot be defined at a point.
Implications involving \texttt{UBLSCA} and \texttt{UBSLSCA} do not appear in  \cref{fig:main_point} as these are not pointwise concepts. Among these new implications, the following are trivial:
\[\begin{array}{lll}
     (\mathtt{SM})\Rightarrow(\mathtt{UBLSCA}) & (\mathtt{LM})\Rightarrow(\mathtt{UBSLSCA}) & (\mathtt{UBLSCA})\Rightarrow(\mathtt{UBSLSCA})\\
     (\mathtt{UBLSCA})\Rightarrow(\mathtt{BLSCA})& (\mathtt{UBSLSCA})\Rightarrow(\mathtt{BSLSCA}) &
\end{array}\]
$(\mathtt{UBSLSCA}+N_1)\Rightarrow(\mathtt{UBLSCA})$ follows via an analogous argument to that for the implication $(\mathtt{LM}+N_1)\Rightarrow(\mathtt{SM})$. The implications
$(\mathtt{SQRGI}+\text{conv. min. seq})\Rightarrow(\mathtt{STWLC})$ and $(\mathtt{STWLC}+\text{conv. min. seq})\Rightarrow(\mathtt{TWLC})$ over the entire domain do not follow immediately from the corresponding pointwise implications, however the proofs for these results follow analogous arguments.

\subsection{Counter examples}
In this section, we prove that no further implications can be inferred other than those obtained by traversing the diagrams from \cref{fig:main_point} and \cref{fig:main_everywhere}. 
To this end, we show that the arrows in \cref{fig:counter_examples} each have counterexamples that prevent the corresponding implications, in the context of this text, even if we restrict $f$ to be real valued and bounded.

\begin{figure}[!]
    \centering
    \includegraphics[width=0.9\linewidth]{./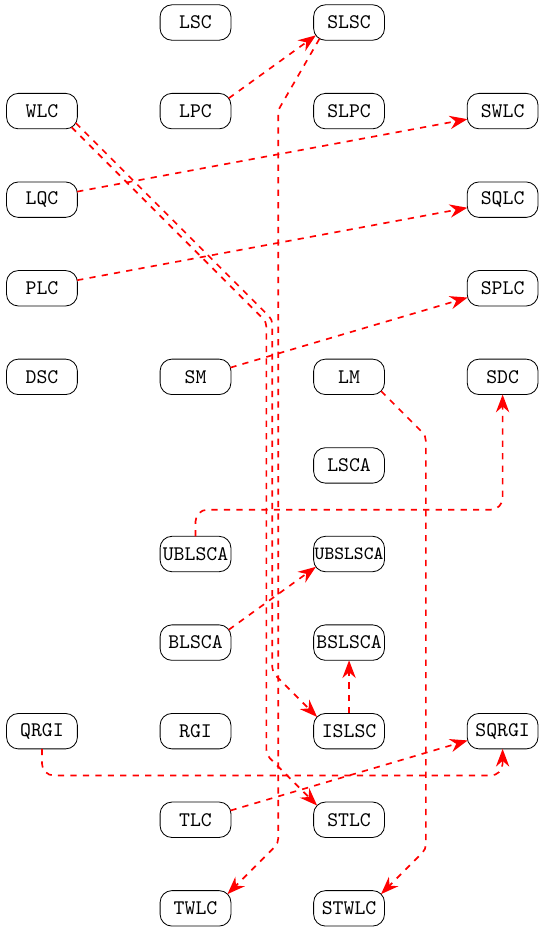}
    \caption{Arrows with counter examples preventing inference of the corresponding implication.}
    \label{fig:counter_examples}
\end{figure}
%\WfigureNC{./pics/impl2.jpeg}{0.7}

\noindent $(\mathtt{SLSC})\nRightarrow(\mathtt{TWLC})$\\ 
From \cite[ex 2.3]{morgan2004pseudocontinuity}, consider $\mathcal X = (0,1)$ equipped with the countable complement topology and take
\[f(x) = x.\] 

\vspace{\baselineskip}

\noindent $(\mathtt{WLC})\nRightarrow(\mathtt{STLC})$ and $(\mathtt{WLC})\nRightarrow(\mathtt{ISLSC})$\\ 
    Take $\mathcal X=\mathbb R$ with the standard Euclidean topology and let %$f= \mathds 1_{[0,\infty)}$.
    \[f(x) = \begin{cases}
        1 & x\geq 0,\\
        0 & x<0.
    \end{cases}\] 
    For each $x\in\mathcal{X}$, if there is any $y$ such that $f(y)<f(x)$, then
    $y<0$ with
    \begin{gather*}
    f(y)=0=\inf_{\mathcal{X}}f\leq \inf_U f,\quad\mbox{for any }U\in\mathcal{N}(x).
    \end{gather*}
    $f$ is then \texttt{WLC} at $x$. 
    However, $f$ is not \texttt{STLC} at 0 as the sequence $x_n=-\frac1n\to 0$ is such that for any $y\in\mathcal X$, 
    \[f(y)\geq f(x_n)\qquad\forall n\in\mathbb N\text.\]
    $f$ is also not \texttt{ISLCS} at $0$ since $(x_n)$ is a minimizing sequence converging
    to $x$. %, so \texttt{ISLSC} is violated at $x=0$.

    \vspace{\baselineskip}

    \noindent
    $(\mathtt{LQC})\nRightarrow (\mathtt{SWLC})$\\ 
    Let $\mathcal X=[-1,2]$ be equipped with its standard topology and let 
    \[f(x) = \begin{cases}
        -x, & -1\leq x\leq 0,\\
        -x-1, & 0<x\leq 1,\\ 
        x-3, & 1<x\leq 2.
        \end{cases}\]
    $f$ is continuous at all $x\neq 0$ and at $x=0$, $\forall y\in\mathcal X$ with $f(y) < f(x)$, 
    \[f(y) \leq -1 = \liminf_{y\to x} f\text.\] 
    Hence, $f$ is \texttt{LQC}. 
    $f$ is not \texttt{WLC} at $x=0$ because $\forall U\in \mathcal N(x)$, 
    \[\inf_U f < -1\] 
    %and thus for any $y\in (0,-1-\inf_Uf)$ we have 
    and by letting $y=2$ we have
    \[0=f(x) > f(y) = -1 > \inf_U f\text.\] 
    Because $\mathcal X$ is $N_1$, this counter example also implies that $(\mathtt{LQC})\nRightarrow (\mathtt{SWLC})$, as required.
    Note that $-f$ was used in \cite[ex 1]{scalzo2009uniform} as a counter example to show that upper quasi-continuity does not imply upper weak continuity.
    
    \vspace{\baselineskip}

    \noindent $(\mathtt{LPC})\nRightarrow(\mathtt{SLSC})$\\ 
    %An example is given in \cite[ex 4.1]{morgan2007pseudocontinuous} \textcolor{red}{\textbf{CHECK EXAMPLE IN REFERENCE IS RIGHT}} but more elementary examples are possible.
    From \cite[ex 4.1]{morgan2007pseudocontinuous}, $(\mathtt{LPC})\nRightarrow(\mathtt{LSC})$ is shown by taking $\mathcal X = [1,3]^2$ with its standard topology, and considering 
    \[f(x,y) = \begin{cases}
        y(x-2) & 1\leq x<2,\\ 
        2 & x=2,\\ 
        x+2 & 2<x\leq 3.
    \end{cases}\] 
    We have the required result because $\mathcal X$ is $N_1$.
    
    We also take the opportunity to present a more elementary example. Let $\mathcal X=(0,1]$ with its standard topology, and consider 
    \[f(x) = \begin{cases}
        x & x<1,\\ 
        2 & x=1.
    \end{cases}\] 
    Then, $f$ is not \texttt{LSC} since $\lev_{\leq 1}f = (0,1)$ is not closed.
    Because $\mathcal X$ is $N_1$, it also follows that $f$ is not \texttt{SLSC}. 
    %Then $f$ is not lsc as $\lev_{\leq 0}f = (-\infty,0)$ is not closed.
    
    We claim that $f$ is \texttt{LPC}. 
    Indeed $\forall x\neq 1$, $f$ is continuous at $x$ and so it is \texttt{LPC} at $x$. 
    At $x=1$, 
    \[\liminf_{z\to x} f(z) = 1,\] 
    and $\forall y\in \mathcal X$ with $f(y) < f(1)=2$, it holds that $f(y)<1$.
    That is, $\forall y\in\mathcal X$ with $f(y) < f(x)$, it follows that
    \[f(y) < \liminf_{z\to x}f(z),\] 
    and therefore $f$ is \texttt{LPC}, as required. 
    
    %\noindent $(\mathrm{sm})\nRightarrow(\mathrm{slqc})$\\ 
    %Let $\mathcal X=[0,3]$ with its standard topology and let 
    %\[f(x) = \begin{cases}
    %    x & x\leq 2\\ 
    %    3-x & \text{else}
    %\end{cases}\] 
    %First, show that this map is sm. 
    %$\forall x\neq 2$ $f$ is continuous at $x$ and so $f$ is sm at $x$.
    %At $x=2$ there is no non-trivial net $x_\alpha$ with $x_\alpha \to x$ and $(f(x_\alpha))$ is decreasing.
    %Hence $f$ is vacuously sm at $x$. 
    %Now showing that $f$ is not lqc.
    % This will show the result Because $\mathcal X$ is first countable.
    %At $x=2$ 
    %\[\liminf_{z\to x}f(z) = 1\] 
    %but at $y=\frac32$, $f(y) = \frac32$. 
    %Hence, $\exists y\in \mathcal X$ with $f(y) < f(x)$ such that 
    %\[\liminf_{z\to x}f(z) < f(y)\] 
    %This means $f$ is not lqc as required.\\

    \vspace{\baselineskip}
    
    \noindent $(\mathtt{LM})\nRightarrow(\mathtt{STWLC})$\\  

    Let $\mathcal X=(\mathbb R\times \mathbb N)/ (\set 0\times \mathbb N)$ be equipped with the quotient topology for $\mathbb R\times \mathbb N$, equipped with the product topology. 
    For $x\neq 0$ and $n\in\mathbb N$, we can take a neighbourhood base
    \[\mathcal B(x,n) = \set{(x-\epsilon,x+\epsilon)\times \set n:\epsilon>0},\] 
    and for the equivalence class $[(0,1)]$, we can take neighbourhood base
    \[\set{\bigcup_{m\in\mathbb N} (-\epsilon_m,\epsilon_m)\times \set m:\epsilon_m>0}.\] 
    For ease of notation, we write $\mathbf 0 = [(0,1)]$ and for any $y\neq 0$ define the coordinate projections $\pi_1(y,n)=y$ and $\pi_2(y,n)=n$.
    
    We will show that for any sequence $(x_n)\subseteq \mathcal X\setminus \set{\mathbf 0}$ converging to $\mathbf 0$, it holds that

%\textcolor{purple}{[SUGGESTING replacing $\aleph_0$ simply by $\infty$.]}
    
    \[|\set{\pi_2(x_n):n\in\mathbb N}| < \aleph_0.\] 
    Indeed, if instead it holds that $|\set{\pi_2(x_n):n\in\mathbb N}| = \aleph_0$, then there exists a subsequence $x_{n_m}$, such that $(\pi_2(x_{n_m}))$ is strictly increasing. Then,
    \[U = \ab{\bigcup_{k\in \set{n_m}} (-|\pi_1(x_k)|,|\pi_1(x_k)|)\times \set k} \cup \ab{\bigcup_{k\notin\set{n_m}} (-1,1)\times \set k}\] 
    is a neighborhood of $\mathbf 0$, but $(x_{n_m})$ is never in $U$.
    In particular $x_{n_m}\nrightarrow\mathbf 0$, and thus $x_n\nrightarrow \mathbf 0$, as required.

    As an immediate consequence, if $(x_n)\subseteq \mathcal X\setminus \set{\mathbf 0}$ converges to $\mathbf 0$, then it is necessary that there exists a subsequence $(x_{n_m})$, such that $(\pi_2(x_{n_m}))$ is constant. 
    Now let $f:\mathcal X\to \mathbb R$ be given by 

%\textcolor{purple}{[SUGGESTING replacing ``otherwise'' below, by ``$x=\mathbf{0}$''.]}
    
    \[f\left(y\right)=\begin{cases}
    \frac{1}{n}\text{e}^{-\left|x\right|} & y=(x,n)\in{\cal X}\backslash\left\{ \mathbf{0}\right\} ,\\
    2 & y=\mathbf0.
    \end{cases}\]
    We first claim that $f$ is \texttt{LM}. To this end, observe that
    for any $x\in \mathcal X\setminus \set{\mathbf 0}$, because the topology locally resembles the topology on $\mathbb R$, $f$ is continuous at $x$.  In particular, $f$ is \texttt{LM} at $x$.
    
    Now take any $x_n\to \mathbf 0$.
    If $(x_n)$ is eventually $\mathbf 0$, then trivially 
    \[f(\mathbf 0) \leq \lim_{n\to\infty} f(x_n).\] 
    If instead, by arguments above, there is a subsequence of $(x_{n_m})$ with $(\pi_2(x_{n_m}))$ constant.
    Along $\mathbb R\times \set{\pi_2(x_{n_m})}$, the topology locally resembles the topology on $\mathbb R$, thus it is impossible for $x_{n_m}\to \mathbf 0$ with $(f(x_{n_m}))$ decreasing.
    It is then impossible for $(f(x_n))$ to be decreasing and thus $f$ is vacuously \texttt{LM} at $\mathbf 0$. 

    We now claim that $f$ is not \texttt{STWLC} at $\mathbf 0$. 
    Indeed, take any $y\in\mathcal X$ and choose any $n\in\mathbb N$ with $\frac 1n< f(y)$, where we note that such an $n$ exists.  
    Then the sequence defined by $x_m = (1/m,n)$ converges to $\mathbf 0$, but 
    \[\liminf_{m\to\infty} f(x_m) = \frac 1n < f(y).\] 
    It follows that $f$ is not \texttt{STWLC}, as required. 
    
    \vspace{\baselineskip}

    \noindent $(\mathtt{UBLSCA})\nRightarrow(\mathtt{SDSC})$\\ 
    Let $\mathcal X=[-1,1]$ with its standard topology, and consider 
    \[f(x) = \begin{cases}
        -1 & x\leq 0,\\ 
        1-x & 0<x<1,\\ 
        1 & x=1.
    \end{cases}\] 
    $f$ is \texttt{UBLSCA} by taking $a=-\frac12$ in the definition of \texttt{UBLSCA}. However,
    $f$ is not \texttt{SDSC} at $x=1$, because $x_n=1-\frac1n$ converges to $x$, with $(f(x_n))$ strictly decreasing, but 
    \[\lim_{n\to\infty} f(x_n) =0 < 1 = f(x).\] 

    \vspace{\baselineskip}

    \noindent $(\mathtt{BLSCA})\nRightarrow(\mathtt{UBSLSCA})$\\ 
    %In \cite[ex 3.7 (b)]{bottaro2010some} (\textcolor{red}{\textbf{CHECK EXAMPLE IN REFERENCE IS RIGHT}}) it is given that $(\mathrm{bslsca})\nRightarrow(\mathrm{ubslsca})$ with $\mathcal X$ first countable. The example in \cite[ex 3.7 (b)]{bottaro2010some} hence shows the result.
    %Here we give a simpler counter-example. 
    As a counter example to show $(\mathtt{BSLSCA})\nRightarrow(\mathtt{UBSLSCA})$, \cite[ex 3.7 (b)]{bottaro2010some} propose to take $\mathcal X=\mathbb R$, with its standard topology, and considers

    \[ f(x) = \begin{cases}
        \arctan(x) & x\in \bigcup_{n\in\mathbb N} (-2n-1, -2n],\\ 
        0 & \text{otherwise.}
    \end{cases}\] 
    Because $\mathcal X$ is $N_1$, this  shows the desired result. 
    
    We can also consider a simpler example. Let $\mathcal X=\mathbb R$ be equipped with its standard topology, and consider 
    \[f(x) = \begin{cases}
        0 & x\leq 0,\\ 
        \frac1{n} & x\in (n-1,n]\text{, }n\in\mathbb N.
    \end{cases}\] 
    To show that $f$ is \texttt{RGI} and thus \texttt{BLSCA}, observe that
    $\forall x\leq 0$, $f(x)=\inf_{\mathcal X}f$, and thus $f$ is \texttt{RGI} at $x$. 
    For any $x>0$, take $U=(0, x+1)\in\mathcal N(x)$.
    Then $\inf_Uf>\inf_{\mathcal X}f=0$, and thus $f$ is \texttt{RGI} at $x$. 
    
    Now, to show that $f$ is not \texttt{UBSLSCA}, 
    take any $a>\inf_{\mathcal X}f = 0$ and  any $N\in\mathbb N$, such that $\frac1{N+1} < a$, and
    consider the sequence, $x_n = N+\frac1n$.
    Then, 
    \[f(x_n) = \frac1{N+1} < a\text.\] 
    $(f(x_n))$ is then decreasing and then have $x_n\to N$ with 
    \[\lim_{n\to\infty} f(x_n) = \frac1{N+1} < \frac1N = f(N),\] 
    and thus $f$ is not \texttt{UBLSCA}.
    Because $\mathcal X$ is $N_1$, $f$ is not \texttt{UBSLSCA}, as required. 

    \vspace{\baselineskip}
    
    \noindent $(\mathtt{ISLSC})\nRightarrow(\mathtt{BSLSCA})$\\ 
    A counter example is given in \cite[ex 3.7(c)]{bottaro2010some}. Here, we provide a simplified version of the argument. Let $\mathcal X = \ell^2(\mathbb R)$ endowed with the weak topology, and let $(e_n)$ denote its standard Shauder basis. Consider 
    \[f(x) = \begin{cases}
        \frac1{\Vert x\Vert} & x\in \bigcup_{n\in\mathbb N} \mathrm{span}(e_n)\setminus \set0,\\ %x = x || e_n,\, n\in\mathbb N,\, x\neq 0\\ 
        1 & \text{otherwise.}
    \end{cases}\] 
    We will first show that $f$ is \texttt{ISLSC} by demonstrating that $f$ does not admit a converging minimizing sequence.
    %First showing $f$ is islsc.
    %We will show this by showing that $f$ does not admit a convergent minimizing sequence. 
    To obtain a contradiction, assume that such a sequence exists.
    Denote it by $(x_n)$ and let $x$ be its limit.
    It must then follow that for $n$ sufficiently large, $x_n = \lambda_n e_{m_n}$, for some $\lambda_n \in \mathbb R\setminus \set0$, with 
    \[|\lambda_n|\to \infty.\] 
    We then have $\Vert x_n\Vert = |\lambda_n|$, and thus $(x_n)$ is unbounded in norm.
    However, weakly convergent sequences are bounded; thus, we have the required contradiction. 

    We will show that $f$ is not \texttt{BSLSCA} at $x=0$. 
    Take any $a>\inf_{\mathcal X}f=0$, and  any $M\in\mathbb [0,\infty)$ with 
    \[\frac1M < \min\set{\frac12, a}.\] 
    Upon letting $x_n = Me_n$, it then follows that  
    \[f(x_n) = \frac1M < \min\set{\frac12, a}.\] 
    In particular, $\forall n\in\mathbb N$, we have $f(x_n) \leq a$ and 
    \[\lim_{n\to\infty} f(x_n) < 1 = f(x).\] 
    The proof is complete if we show that $(x_n)$ converges weakly to $x=0$.
    Take any $y\in \ell^2(\mathbb R)$ and write 
    \[y = \sum_{n=1}^\infty a_n e_n.\] 
    By definition of $\ell^2$,
    \[\sum_{n=1}^\infty |a_n|^2 < \infty,\] 
    and thus, it must hold that $a_n\to 0$. 
    It follows that 
    \[\inprod{y}{x_n} = Ma_n \to 0 = \inprod y0,\] 
    therefore $(x_n)$ converges weakly to $x$, as required.

   \vspace{\baselineskip}
   
   \noindent $(\mathtt{TLC})\nRightarrow(\mathtt{SQRGI})$\\ 
    Let $\mathcal X=[-1,1]$ with its standard topology, and consider 
    \[f(x) = \begin{cases}
        0 & x\leq 0,\\ 
        1-x & 0<x<1,\\ 
        1 & x=1.
    \end{cases}\] 
    Then, $f$ is continuous on $\mathcal X\setminus \set{0,1}$, and in particular, it is also \texttt{TLC} on the set. Further, 
    $f$ is \texttt{LSC} at $x=0$, thus also \texttt{TLC}, and
    at $x=1$, letting $U=(1/2,1]\in \mathcal N(x)$, it follows that
    $\forall z\in U$, $f(z) > f(0)$, therefore 
    $f$ is also \texttt{TLC} at $x=1$.

    We will show that $f$ is not \texttt{QRGI} at $x=1$. Because $\mathcal X$ is $N_1$, it will follow that $f$ is not \texttt{SQRGI} at $x=1$, as required.
    %Clearly there is sequence $x_n\in\argmin_{\mathcal X}f = [-1,0]$ with $x_n\to 1$.
    Observe that $\forall U\in\mathcal N(1)$, $\inf_Uf = \inf_{\mathcal X}f$. 
    Additionally, for $U= (1/2,1]\in \mathcal N(1)$, we have that $\inf_{\mathcal X}f \notin f(U)$, and thus
    neither of the conditions required for $f$ to be \texttt{QRGI} are satisfied, yielding the counter example.

   \vspace{\baselineskip}

    %\noindent $(\mathrm{twlc})\nRightarrow(\mathrm{stlc})$\\ 
    %%A counter example to $(\mathrm{twlc})\nRightarrow(\mathrm{tlc})$ is given in \cite[ex 2]{tian1995transfer} with $\mathcal X$ first countable and so also applies here.
    %%Here, we give a simpler counter-example.
    %\cite[ex 2]{tian1995transfer} gives a counter example to $(\mathrm{twlc})\nRightarrow(\mathrm{tlc})$ by taking $\mathcal X=[0,1]$ and letting 
    %\[f(x) = \begin{cases}
    %    1 & x\in\mathbb Q\\
    %    0 & \text{otherwise}
    %\end{cases}\] 
    % This counter-example also shows the result Because $\mathcal X$ is first countable. Here, we give another counter-example.
    %
    %Let $\mathcal X= \mathbb R$ with its standard topology and let $f = \mathds 1_{[0,\infty)}$. 
    %$f$ attains its minima so it is trivially twlc. 
    %The sequence $x_n = -1/n$ converges to 0 but $\forall n$ $f(x_n) = \inf_{\mathcal X}$ and $f(0) > \inf_{\mathcal X}$. 
    %This means $f$ is not stlc as required. \\

    \noindent $(\mathtt{PLC})\nRightarrow(\mathtt{SLQC})$\\ 
    Let $\mathcal X=[0,1]$ be equipped with its standard topology, and consider 
    \[f(x) = \begin{cases}
        2 & x=0,\\ 
        0 & x\in(0,1),\\ 
        1 & x=1.
    \end{cases}\] 
    To show that $f$ is \texttt{PLC}, observe that for $x\in(0,1)$, $f(x) = \inf_{\mathcal X}f$ and so $f$ is vacuously \texttt{PLC} at $x$. 
    At $x=1$, there is no $y$ such that $f(y) < f(x)$ and $(x,y)$ is not a jump point. Hence, $f$ is again vacuously \texttt{PLC} at $x$. 
    At $x=0$, the only $y$ with $f(y) < f(x)$ and $(x,y)$ not a jump point have $f(y) = 0 = \inf_{\mathcal X}f$. Hence, $f$ is \texttt{PLC} at $x$.
    %Then, for $x\in\set{0,1}$, notice that $\forall U\in\mathcal N(x)$, we have $\inf_Uf= 0$ and $\forall y\in\mathcal X$ with $f(y)<f(x)$ and where $(x,y)$ is not a jump point, $f(y)=0$. 
    This accounts for all possible $x$, and so $f$ is \texttt{PLC}.
    %$f$ is not lqc at $x=0$ as $\forall U\in\mathcal N(0)$ $\exists z\in U$ such that $f(z)=0$. 
    %However, 
    %\[0<f(1)<f(0)\] 
    %Because $\mathcal X$ is first countable this shows that $f$ is not slqc as required.\\ 
    
    However, $f$ is not \texttt{LQC} at $x=0$ because 
    \[\liminf_{z\to 0} f(z) = 0,\] 
    but 
    \[0<f(1)< f(0).\] 
    Because $\mathcal X$ is $N_1$, $f$ is also not \texttt{SLQC} as required.

   \vspace{\baselineskip}

    \noindent
    $(\mathtt{SM})\nRightarrow (\mathtt{SPLC})$\\
     Let $\mathcal X=[0,3]$ be equipped with its standard topology, and consider 
    \[f(x) = \begin{cases}
        x & x\leq 2,\\ 
        3-x & \text{otherwise.}
    \end{cases}\] 
     Observe that $\forall x\neq 2$, $f$ is continuous at $x$ and so $f$ is \texttt{SM} at $x$, and at $x=2$ there is no non-trivial net $(x_\alpha)$, with $x_\alpha \to x$ and $(f(x_\alpha))$ is decreasing.
     Therefore, $f$ is \texttt{SM}. 

    However, since 
    \[\liminf_{z\to 2}f(z) = 1\] 
    but 
    \[1 < f(3/2) = \frac{3}{2} < f(2) = 2.\]
    Because $f$ has no jump points, this shows that $f$ is not \texttt{PLC} at $x=2$, and because $\mathcal X$ is $N_1$, this then implies that $f$ is not \texttt{SPLC}.

    \vspace{\baselineskip}

    \noindent
    $(\mathtt{QRGI})\nRightarrow (\mathtt{SQRGI})$\\ 
    Let $A$ denote $\mathbb R$ equipped with the countable complement topology, and let $B$ denote $\mathbb R$ with its usual topology.
    Let $\mathcal X=A\times B$ be equipped with the product topology and
    consider 
    % \[f(x,y) = \begin{cases}
    %     1 & x=0,\, y=0,\\
    %     0 & x\neq 0,\, y=0,\\
    %     |y| & y\neq 0.
    % \end{cases}\] 
    \[f(x,y) = \begin{cases}
        1 & x=0,\, y=0,\\
        %0 & x\neq 0,\, y=0,\\
        |y| & \text{otherwise.}
    \end{cases}\] 
    To show that $f$ is \texttt{QRGI},
    take any $(x,y)\in\mathcal X$.
    If $y>0$, then $U:=A\times (y/2,\infty)\in\mathcal N(x,y)$ with 
    $$\inf_{\mathcal Y}f=y/2>0=\inf_{\mathcal X}f,$$
    and thus, $f$ is \texttt{QRGI} at these points. Following an analogous argument, $f$ also \texttt{QRGI} at all points where $y<0$.
    Next, for $y=0$, if $x\neq 0$,then $f(x,y) = \inf_{\mathcal X}f$. Thus $f$ is \texttt{LSC} at $(x,y)$ and so in particular \texttt{QRGI} at $(x,y)$. 
    When $(x,y)=(0,0)$, any basic neighbourhood $U\times V$ of $(x,y)$ has $U$ with countable complement. 
    Therefore, $\inf_{\mathcal X}f = 0 \in f(U\times V)$.
    Then, because any neighbourhood $W$ of $(0,0)$ contains a basic neighbourhood, we obtain that $\inf_{\mathcal X}f\in f(W)$, implying that $f$ is \texttt{QRGI} at $(x,y)=(0,0)$.

    Now, we show that $f$ is not \texttt{SQRGI} at $(x,y)=(0,0)$. To this end,
    observe that the sequence $(x_n,y_n) = (0,1/n)$ is minimizing and converges to $(0,0)$. 
    We are then done if there does not exist a sequence $(z_n)\subset\argmin_{\mathcal X}f$ with $z_n\to (0,0)$.
    Any sequence $(x_n,y_n)\subset\argmin_{\mathcal X}f$ must have $y_n=0$ and $x_n\neq 0$.
    Thus, such a sequence is in $A\times \set 0$.
    Hence, for $(x_n,y_n)$ to converge to $(0,0)$, $(x_n)$ must be eventually constant at $0$.
    However, $\forall n\in\mathbb N$, $x_n\neq 0$ and so no $(x_n,y_n)\subset \argmin_{\mathcal X}f$ can converge to $(0,0)$. 
    $f$ is then not \texttt{SQRGI}, as required.

\section*{Appendix}
Below, we collect references regarding the known implications and counter examples presented in \cref{fig:known}.
\begin{enumerate}
    \item If $f$ is \texttt{LSC}, then it is \texttt{LPC}: \cite[p 175]{morgan2007pseudocontinuous}.
    \item There exists an $f$ that is \texttt{LPC} but not \texttt{LSC}: \cite[ex 4.1]{morgan2007pseudocontinuous}.
    \item If $f$ is \texttt{LSC}, then $f$ is \texttt{LSCA} \cite[ex 1.3]{chen2002note}.
    \item There exists an $f$ that is \texttt{LSCA} but not \texttt{LSC}:  \cite[ex 1.3]{chen2002note}
    \item If $f$ is \texttt{SLSC} then $f$ is \texttt{SLPC}: \cite[rem 2.1]{morgan2004pseudocontinuity}.
    \item $f$ There exists an $f$ that is \texttt{SLPC} but not \texttt{SLSC}: \cite[rem 2.1]{morgan2004pseudocontinuity}.
    \item $f$ There exists an $f$ that is \texttt{SLSC} but not \texttt{TWLC}: \cite[ex 2.3]{morgan2004pseudocontinuity}.
    \item If $f$ is \texttt{LPC} at $x$, then $f$ is \texttt{RGI} at $x$: \cite[prop 7]{amini2016some}.
    \item If $f$ is \texttt{LPC}, then $f$ is \texttt{SLPC}: \cite[prop 2.3]{morgan2007pseudocontinuous}.
    \item If $\mathcal X$ is $N_1$ and $f$ is \texttt{SLPC}, then $f$ is \texttt{LPC}: \cite[prop 2.3]{morgan2007pseudocontinuous}.
    \item If $f$ is \texttt{LPC}, then $f$ is \texttt{TLC}: \cite[p.g. 175]{morgan2007pseudocontinuous}.
    \item There exists an $f$ that is\texttt{TLC} but not \texttt{LPC}: \cite[p 175]{morgan2007pseudocontinuous}.
    \item There exists an $f$ that is \texttt{SLPC} but not \texttt{LPC}: \cite[prop 2.3]{morgan2007pseudocontinuous}.
    \item If $f$ is \texttt{LPC}, then $f$ is \texttt{WLC}: \cite{scalzo2009uniform}.
    \item If $f$ is \texttt{SLPC}, then $f$ is \texttt{SLQC}: \cite[rem 3.1]{morgan2004new}.
    \item If $f$ is \texttt{SLPC}, then $f$ is \texttt{ISLSC}: \cite[p 317(a)]{aruffo2008generalizations}.
    \item There exists an $f$ that is \texttt{SLQC} but not \texttt{SLPC}: \cite{morgan2006discontinuous}. 
    \item If $f$ is \texttt{WLC}, then $f$ is \texttt{LQC}: \cite{scalzo2009uniform}.
    \item There exists an $f$ that is \texttt{LQC} but not \texttt{WLC}: \cite[ex 1]{scalzo2009uniform}.
    \item If $f$ is \texttt{LQC} at $x$, then $f$ is \texttt{QRGI} at $x$: \cite[prop 7]{amini2016some}.
    \item There exists an $f$ that is \texttt{QRGI} but not \texttt{LQC}: \cite[ex 8]{amini2016some}.
    \item There exists an $f$ that is\texttt{LQC} but not \texttt{RGI}: \cite[ex 10]{amini2016some}.
    \item If $f$ is \texttt{LQC}, then $f$ is \texttt{SLQC}: \cite[prop 1]{scalzo2009uniform}.
    \item If $\mathcal X$ is $N_1$ and $f$ is \texttt{SLQC}, then $f$ is \texttt{LQC}: \cite[prop 1]{scalzo2009uniform}.
    \item There exists an $f$ that is \texttt{SLQC} but not \texttt{ISLSC}: \cite[p 317(c)]{aruffo2008generalizations}.
    \item There exists an $f$ that is \texttt{ISLSC} but not \texttt{SLQC}: \cite[p 317(c)]{aruffo2008generalizations}.
    \item If $f$ is \texttt{SLQC}, then $f$ is \texttt{STWLC}: \cite[rem 3.1]{morgan2004new}.
    \item If $f$ is \texttt{LSCA}, then $f$ is \texttt{UBSLSCA}: \cite[p 5]{bottaro2010some}.
    \item If $f$ is \texttt{LSCA}, then $f$ is \texttt{SDSC}: \cite{bao2022exact}.
    \item There exists an $f$ that is \texttt{SDSC} but not \texttt{LSCA}: \cite{bao2022exact}.
    \item There exists an $f$ that is \texttt{UBSLSCA} but not \texttt{LSCA}: \cite[ex 3.7(a)]{bottaro2010some}.
    \item If $f$ is \texttt{UBSLSCA}, then $f$ is \texttt{BSLSCA}: \cite[p 5]{bottaro2010some}.
    \item There exists an $f$ that is \texttt{BSLSCA} but not \texttt{UBSLSCA}: \cite[ex 3.7(b)]{bottaro2010some}.
    \item If $f$ is \texttt{BSLSCA}, then $f$ is \texttt{ISLSC}: \cite[p.g. 5]{bottaro2010some}.
    \item If $\mathcal X$ is $N_1$ and $f$ is \texttt{ISLSC}, then $f$ is \texttt{BSLSCA}: \cite[thm 3.4]{bottaro2010some}.
    \item There exists an $f$ that is \texttt{ISLSC} but not \texttt{BSLSCA}: \cite[ex 3.7(c)]{bottaro2010some}.
    \item There exists an $f$ that is \texttt{QRGI} but not \texttt{RGI}: \cite[ex 5]{amini2016some}.
    \item If $f$ is \texttt{RGI}, then $f$ is \texttt{QRGI}: \cite{amini2016some}.
    \item If $f$ is \texttt{RGI} at $x$, then $f$ is \texttt{TLC} at $x$: \cite[prop 7]{amini2016some}.
    \item There exists an $f$ that is \texttt{TLC} but not \texttt{RGI}: \cite[ex 10]{amini2016some}.
    \item If $f$ is \texttt{QRGI} at $x$, then $f$ is \texttt{TWLC} at $x$: \cite[prop 7]{amini2016some}.
    \item There exists an $f$ that is \texttt{TWLC} but not \texttt{QRGI}: \cite[ex 9]{amini2016some}.
    \item If $f$ is \texttt{LSCA}, then $f$ is \texttt{ISLSC}: \cite[rem 4.2]{aruffo2008generalizations}.
    \item If $\mathcal X$ is $N_1$ and $f$ is \texttt{ISLSC}, then $f$ is \texttt{STWLC} \cite[p 317(d)]{aruffo2008generalizations}. 
    \item There exists an $f$ that \texttt{ISLSC} but not \texttt{STWLC}: \cite[p 317(e)]{aruffo2008generalizations}.
    \item If $f$ is \texttt{TLC}, then $f$ is \texttt{TWLC}: \cite{tian1995transfer}.
    \item There exists an $f$ that is \texttt{TWLC} but not \texttt{TLC}: \cite[ex 2]{tian1995transfer}.
    \item If $f$ is \texttt{TWLC}, then $f$ is \texttt{STWLC}: \cite[prop 2.1]{morgan2004new}.
    \item There exists an $f$ that is \texttt{STWLC} but not \texttt{TWLC}: \cite[prop 2.1]{morgan2004new}.
    \item If $\mathcal X$ is $N_1$ and $f$ is \texttt{STWLC}, then $f$ is \texttt{TWLC}: \cite[prop 2.1]{morgan2004new}.
    \item There exists an $f$ that is \texttt{TWLC} but not \texttt{LQC}: \cite{morgan2006discontinuous}.
    %\item If $f$ is lsc then $f$ is sdsc \cite[ex 1]{gajek1994weierstrass}
    %\item If $f$ is wlc then $f$ is twlc \cite{tian1995transfer}
    %\item $f$ can be twlc and not wlc \cite{tian1995transfer}
    %\item $f$ can be wlc and not lpc \cite{scalzo2009uniform}
    %\item $f$ can be wlc and not slpc \cite[rem 3.1]{morgan2004new}
    %\item If $f$ is lsc then $f$ is wlc \cite[p.g. 32]{scalzo2013essential}
    %\item $f$ can be wlc and not lsc \cite[p.g. 3.2]{scalzo2013essential}. 
\end{enumerate}

\bibliographystyle{siamplain}
\bibliography{refs}
\end{document}